\documentclass[12pt]{article} 
\usepackage[letterpaper,left=2.5cm,top=2.5cm,right=2.5cm,bottom=2.5cm]{geometry}  
\usepackage{latexsym} 
\usepackage{layout}
\usepackage{float}
\usepackage{framed}
\usepackage{textcomp}
\usepackage{amstext}
\usepackage{multicol} 
\usepackage{longtable} 
\usepackage{lscape} 
\usepackage{multirow} 
\usepackage{array} 
\usepackage{amssymb}
\usepackage{amsmath} 
\usepackage{amsthm}
\usepackage[T1]{fontenc} 
\usepackage{graphicx,tikz}
\usepackage{verbatim}

\newtheorem{theorem}{Theorem}[section]
\newtheorem{lemma}[theorem]{Lemma}

\theoremstyle{definition}
\newtheorem{definition}[theorem]{Definition}

\newtheorem{corollary}[theorem]{Corollary}

\theoremstyle{remark}

\newtheorem*{remark*}{Remark}

\theoremstyle{plain}
\newtheorem{prop}[theorem]{Proposition}

\usepackage[pdftex,colorlinks=true,linkcolor=blue,citecolor=black,urlcolor=cyan]{hyperref}

\newlength{\LL}

\makeatletter
\newcommand{\@authorinfos}{}

\newcommand{\authorinfo}[3]{
  \g@addto@macro{\@authorinfos}{%
    \textit{#1}
    \par\noindent #2;
    \par\noindent \textit{Email address:} \href{mailto:#3}{#3}
    \par\bigskip \noindent
  }
}

\newcommand{\printauthorinfos}{
  \par\bigskip
  \par\noindent\@authorinfos
}
\makeatother

\renewcommand{\and}{, \, }

\begin{document}
\title{ \Huge \bf Regularity for convex viscosity solutions of sigma-2 Equation \footnote{This work was supported by NSFC (No. 12141103)}}
\author{Chen Ruosi \and Jian Huaiyu \and Tu Xushan \and Zhou Xingchen }

\maketitle

\authorinfo{Chen Ruosi}{Department of Mathematical Sciences, Tsinghua University, Beijing, China}{crs22@mails.tsinghua.edu.cn}
\authorinfo{Jian Huaiyu}{Department of Mathematical Sciences, Tsinghua University, Beijing, China}{hjian@mail.tsinghua.edu.cn}
\authorinfo{Tu Xushan}{Department of Mathematics,  The Hong Kong University of Science and Technology, Hong Kong, China}{maxstu@ust.hk}
\authorinfo{Zhou Xingchen}{School of Mathematics and Statistics, Hainan University, Haikou, 570228, PR China}{zxc3zxc4zxc5@stu.xjtu.edu.cn}

\vspace{-1em}
\hrulefill

\begin{abstract}
We prove an interior $C^{2}$ regularity result for convex viscosity solutions of the quadratic Hessian equation $\sigma_2(D^2u) = f(x)$ in all dimensions, under the assumptions $f\in C^{0,1}$ and $\inf f>0$.
\end{abstract}

\section{Introduction}

Hessian equation is a fundamental model in fully nonlinear partial differential equations. The  existence of weak solutions, including viscosity solution, measure solution and their relation, has been extensively studied; see \cite{Ishii_LionsViscositysolu,
trudinger1990dirichlet,Tru97,TW99,Urb90,Wan09} for example.
In this paper, we study interior regularity  for convex viscosity solutions of the quadratic Hessian equation
\begin{equation}
    \label{eq:sigma_lambda}
    \sigma_{2}(D^{2}u)=\sum_{1\leq i<j\leq n}\lambda_{i}\lambda_{j}=f(x),
\end{equation}
where \(\lambda=(\lambda_{1},\ldots,\lambda_{n})\) are the eigenvalues of the Hessian matrix \(D^{2}u\), and \(f\) is a Lipschitz function with nondegenerate condition \(\inf f>0\).

Equation (1) is closely related  to  the problem  for prescribing scalar curvature  of hypersurfaces,
\begin{equation}
    \sigma_{2}(\kappa)= \sum_{1\leq i<j\leq n}\kappa_{i}\kappa_{j}=f(x),
    \label{eq:sigma_kappa}
\end{equation}
where $\kappa = (\kappa_1,\cdots,\kappa_n)$ denotes the principal curvatures.
This problem has been widely studied, dating back to foundational work on the Minkowski and Weyl problems by Heinz \cite{heinz1959elliptic}, Nirenberg \cite{MR0058265}, Aleksandrov \cite{MR0086338} and Pogorelov \cite{pogorelov1978multidimensional}. Later contributions include those by Cheng-Yau \cite{MR4519019}, Guan-Guan \cite{MR1933079}, Guan-Li-Li \cite{MR2954620}, Guan-Ren-Wang \cite{guan2015global} and many others.

\medskip

A key step to study the regularity for the solutions  is to establish Hessian estimates.
Interior Hessian bounds for strictly $k$-convex solutions were first derived by Chou-Wang \cite{MR1835381} and Sheng-Urbas-Wang \cite{Urbas-Wang04} respectively, using Pogorelov's pointwise technique.
Urbas \cite{Urbas00,Urbas01} obtained $C^2$ estimates in terms of integrals of the Hessian for general $k$-Hessian equations.
 In the case $f\equiv 1$ and $n=3$, Warren-Yuan \cite{warren2009hessian} used a special Lagrangian integral approach and so proved Hessian estimates for \eqref{eq:sigma_lambda}, later Qiu-Zhou \cite{QiuZhouSpecialLagrangian24} extended the result to equation \eqref{eq:sigma_kappa}.
Also in dimension three, Qiu \cite{qiu2019interior,qiu2017interiorHessian} established interior curvature estimates for  equation $\sigma_2(\kappa) = f(X,\nu)$ and interior Hessian estimates for $\sigma_2(D^2 u) = f(x,u,Du)$ respectively. These estimates require the   $C^2$ regularity of $f$ and depend on $\|f\|_{C^2}$.
Under the convexity assumption $\sigma_3>-C$, Guan-Qiu \cite{guan2019interior} obtained $C^2$ estimates for both \eqref{eq:sigma_lambda} and \eqref{eq:sigma_kappa}, which also depend on $\|f\|_{C^2}$.
 More Hessian estimates for the special case $f\equiv 1$ can be found in the work of McGonagle-Song-Yuan \cite{mcgonagle2019hessian} for almost convex solutions via a compactness argument, and Shankar-Yuan \cite{shankar2020hessian} for semi-convex solutions via an integral Jacobi inequality and Legendre-Lewy transform.
More recently, Shankar-Yuan \cite{shankar2025hessian} discovered  that a suitable Jacobi inequality holds only in dimension $n\le 4$, and established Hessian estimates in the critical four-dimensional case. Their results also yield estimates in higher dimensions $n\geq 5$, under a dynamic semi-convexity condition $\frac{\lambda_{\min}}{\Delta u}\ge -c(n)$.

\medskip

A common requirement for the results mentioned above is that the right-hand side $f$ is at least $C^2$ and the Hessian estimates
depend on $\|f\|_{C^2}$.  An exception is the work of Zhou \cite{zhou2025hessian}, which only needs $\|f\|_{C^{0,1}}$ to establish interior $C^2$ estimates in dimension $3$.
In our recent work \cite{chenjianzhou2024PSCE}, we obtained interior Hessian estimates for smooth convex solutions in terms of $\|f\|_{C^{0,1}}$ in all dimensions $n\geq 2$.




When $n=2$, equation \eqref{eq:sigma_lambda} reduces to a Monge-Amp\`ere equation.
 The interior $C^2$-regularity for convex viscosity solutions of  Monge-Amp\`ere equations in all dimensions is known under  the optimal conditions: Caffarelli \cite{c} obtained $C^{2,\alpha}$ regularity for $f \in C^\alpha$ with $0<\alpha<1$, and Jian-Wang \cite{jw} extended it to  the case $f \in C^\alpha$ with $0\leq \alpha\leq1$.
Incidentally,  the corresponding global  $C^{2,\alpha}$ regularity has been obtained  for, respectively,
Dirichlet boundary  problem in \cite{s,tw}, natural boundary problem in  \cite{clw, jz} and oblique boundary  problem in  \cite{jt}.

Based on the above observation and the interior Hessian estimates for equation (1), it is natural to ask the following question:

{\sl Let $u$ be a convex viscosity solution of $\sigma_2(D^2 u) = f$ in $B_1$ with $f\in C^k(B_1)$. What is  the smallest $k\geq 0$ such that $u$ is a classical $C^2$ solution with an interior Hessian estimate
    \begin{equation*}
       |D^2u(0)|\le C(n, \|u\|_{L^\infty(B_1)}, \|f^{-1}\|_{L^\infty(B_1)}, \|f\|_{C^k(B_1)})?
    \end{equation*} }
\medskip

Establishing such regularity needs to overcome two main challenges.
First, one cannot directly approximate a convex viscosity solution by smooth convex solutions. The standard solvability of Dirichlet problem in Caffarelli-Nirenberg-Spruck \cite{MR0806416} yields only smooth 2-convex solutions, to which the existing estimates for convex solutions do not apply.
Second, for these smooth 2-convex approximating functions, the Jacobi inequality, which is a key tool for Hessian estimates, may fail, as mentioned in \cite{shankar2025hessian}.

Nevertheless, a few regularity results are available. Mooney \cite{MR4246798} proved the strict 2-convexity   of convex viscosity solution to $\sigma_2(D^2 u) \geq 1$  and the interior $C^2$ regularity, hence  smoothness, of convex viscosity solution to $\sigma_2(D^2 u) = 1$. In dimension four, Shankar and Yuan \cite{shankar2025hessian} removed the convexity assumption and obtained  the  interior $C^2$ regularity  of  viscosity solution to $\sigma_2(D^2 u) = 1$.  Very recently, Fan \cite{Fan} extended  this result to the equation $\sigma_2(D^2 u) = f(x)$ in dimension four, and proved $u\in C^{3, \alpha} $ for any $\alpha\in (0, 1)$   if $f\in C^{1,1}$ and $f>0$,
where $u$ is a continuous viscosity  solution  to $\sigma_2(D^2 u) = f(x)$  satisfying  $\Delta u>0$ in the viscosity sense.
\medskip

In this paper, we try   to answer   the above question for  as small $k$ as possible.

\begin{theorem}\label{Thm_sigma}
    Let $u$ be a convex viscosity solution to equation \eqref{eq:sigma_lambda} in $B_{2} $ with $f \in C^{0,1}(B_2)$ and $\inf_{B_{2}}f>0$.
    Then $u \in C^{2}(B_2)$, and
    $$ \|u\|_{C^{2}(\overline{B_{1/4}})}\le C(n,\|u\|_{L^\infty(B_{2})}, \|f^{-1}\|_{L^\infty(B_{2})}, \|f\|_{C^{0,1}(B_{2})}). $$
\end{theorem}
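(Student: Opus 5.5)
The plan is to approximate $u$ by smooth solutions for which an a priori Hessian estimate in terms of $\|f\|_{C^{0,1}}$ is available, and then pass to the limit using uniqueness of viscosity solutions. The difficulty is that the estimate in \cite{chenjianzhou2024PSCE} is proved only for smooth \emph{convex} solutions. Dirichlet problems solved by Caffarelli--Nirenberg--Spruck \cite{MR0806416} give only 2-convex solutions, so convexity of the approximants has to be recovered.

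\textbf{Step 1 (localization and strict convexity).} By Mooney \cite{MR4246798}, since $\sigma_2(D^2u)\ge \inf f>0$ in the viscosity sense, $u$ is strictly 2-convex. I will try to upgrade this to the absence of line segments in the graph on the relevant scale, i.e.\ strict convexity in the sense of Caffarelli. The tool would be Mooney's argument: a convex solution that is affine along a segment forces $\sigma_2$ to degenerate, which the bound $f\ge \inf f$ rules out. This yields a section $S=\{u<\ell+h\}\Subset B_2$ around a point of $B_{1/4}$, with size controlled by $n$, $\|u\|_{L^\infty}$ and $\|f^{-1}\|_{L^\infty}$ via compactness.

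\textbf{Step 2 (approximation).} Mollify, taking $f_\varepsilon\to f$ uniformly on $S$ with $\|f_\varepsilon\|_{C^{0,1}}\le\|f\|_{C^{0,1}}$. Smooth $\partial S$ to uniformly convex domains $\Omega_\varepsilon$, and solve
\[
\sigma_2(D^2u_\varepsilon)=f_\varepsilon \ \text{in } \Omega_\varepsilon,\qquad u_\varepsilon=\ell+h \ \text{on } \partial\Omega_\varepsilon
\]
by \cite{MR0806416}. Comparison for 2-convex viscosity solutions gives $u_\varepsilon\to u$ locally uniformly.

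\textbf{Step 3 (main obstacle: convexity of $u_\varepsilon$).} I would try to show that $u_\varepsilon$ is convex in a fixed interior subdomain, using a constant-rank or minimum-eigenvalue argument. Apply the maximum principle to a quantity such as $\lambda_{\min}(D^2u_\varepsilon)$ times a cutoff built from $\ell+h-u_\varepsilon$, with the aim of a bound $\lambda_{\min}\ge -\delta(\varepsilon)\to0$.

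Only Lipschitz control on $f_\varepsilon$ is uniform, so terms involving $D^2f_\varepsilon$ must be avoided. If this fails, the fallback is to reprove the Pogorelov-type estimate of \cite{chenjianzhou2024PSCE} for semi-convex solutions with $\lambda_{\min}\ge -c(n)\Delta u$. That condition is an open condition near the convex limit $u$ (cf.\ the dynamic semi-convexity in \cite{shankar2025hessian}), and a continuity argument in $\varepsilon$ should maintain it. The Jacobi-type inequality needed there uses only $Df$, after integrating by parts the term $\sum F^{ii}f_{11}$ against the test function.

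\textbf{Step 4 (conclusion).} The resulting estimate is $|D^2u_\varepsilon|\le C$ on a fixed neighborhood of each point of $\overline{B_{1/4}}$, with $C$ depending only on the stated quantities. Then the equation is uniformly elliptic and concave there, since $\sigma_2^{1/2}$ is concave. Evans--Krylov with a Lipschitz right-hand side, via its $C^{2,\alpha}$ form for $C^{\alpha}$ data, gives uniform $C^{2,\alpha}$ bounds. Passing to the limit shows $u\in C^{2,\alpha}$ near each point, and a covering gives $u\in C^2(B_2)$ together with the stated bound on $\|u\|_{C^2(\overline{B_{1/4}})}$.
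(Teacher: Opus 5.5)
There is a genuine gap, and it begins already in Step 1. For $n\ge 3$ you cannot upgrade Mooney's strict 2-convexity to strict convexity (no segments in the graph). Take $u=\tfrac12(x_1^2+x_2^2)$: it is a smooth convex solution of $\sigma_2(D^2u)=1$ in $\mathbb{R}^n$ that is constant in $x_3,\dots,x_n$. More generally, trivial extensions of planar Monge--Amp\`ere solutions solve $\sigma_2=f$ (cf.\ the Appendix). A zero eigenvalue does not make $\sigma_2$ degenerate. Mooney's theorem only places the contact set $\{u=\ell\}$ inside an $(n-2)$-dimensional subspace.

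Consequently a section $\{u<\ell+h\}$ need not be compactly contained in $B_2$ for any $h>0$, and Step 2 has no bounded convex domain to solve on. The paper avoids this with a barrier that is only 2-convex, $w=\frac{\delta_0}{3n}[2(n-2)(x_1^2+x_2^2)-(x_3^2+\cdots+x_n^2)]+\frac{\delta_1}{2}$. It bends down in the possibly flat directions, so the component of $\{w>u\}$ containing $0$ lies in $B_{3/4}$. The approximants $v_k$ are solved on $B_1$, and $(w-v_k)^6$ serves as a cutoff. The only property of $w$ used is $\Delta_F w>0$ (G\r{a}rding), i.e.\ $\Delta_F(w-v_k)\ge -2f$.

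Step 3 is the heart of the problem, and neither of your routes closes it.
\begin{itemize}
\item The convergence $u_\varepsilon\to u$ is only uniform, so convexity of $u$ transfers no Hessian information to $u_\varepsilon$.
\item The dynamic semi-convexity $\lambda_{\min}\ge -c(n)\Delta u$ is ``open'' only in a $C^2$ topology, and $u$ is not yet known to lie in that space.
\item A continuity argument in $\varepsilon$ would need an a priori improvement of the semi-convexity constant, which you do not supply.
\item Minimum-eigenvalue or constant-rank maximum principles bring in $D^2f_\varepsilon$ and structural hypotheses on $f$ that are unavailable here.
\end{itemize}

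The missing idea is a Hessian estimate valid for \emph{arbitrary} smooth 2-convex solutions, with no semi-convexity assumed anywhere. The paper obtains it from a boundary Jacobi inequality for $\varphi b$, with $b=\Delta u+A+e^{|Du|^2}$, proved region by region:
\begin{itemize}
\item Where $\lambda_n/\Delta u\ge -\frac14$, the dynamic Jacobi inequality with constant $\frac43$ holds.
\item Where $|\nabla_F\varphi|/\varphi\le\frac16|\nabla_F b|/b$, the added positive term $\frac12|\nabla_F(\varphi b)|^2/(\varphi b)$ absorbs the cross term. There only $\Delta_F\Delta u\ge\Delta f-\frac32|Df|^2/f$ is needed, and this holds for every 2-convex solution.
\item In the remaining region, the failure of semi-convexity itself helps: $\sum_i F_{ii}\lambda_i^2\ge(\Delta u-\lambda_n)\lambda_n^2\ge c(n)(\Delta u)^3$, so $\Delta_F e^{|Du|^2}$ dominates the bad gradient term.
\end{itemize}
Integrating against $(\varphi b)^p$, with $\Delta f$ moved onto $Df$ by parts, gives the $W^{2,p}$ recursion. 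Moser iteration then gives the $L^\infty$ bound, and only at that point does your Step 4 (Evans--Krylov and passage to the limit) apply.
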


\bigskip

Although we are not sure that $k=1$ is the smallest for  the above question, we do know the smallest $k$ must be positive, since for  a  merely continuous function $f$, there exist convex viscosity solutions that fail to be $C^{1,1}$.
This follows by adapting Wang's counterexample \cite[Theorem 1]{Wangcounterexample} for the Monge-Amp\`ere equation in dimension two to higher dimensions $n\geq 3$. Consider the function
\begin{equation*}
    u(x,y,x_3,\cdots,x_n) = U(x,y) = \frac{x^2}{\ln|\ln (x^2+y^2)|}+y^2\ln|\ln (x^2+y^2)|.
\end{equation*}
Since $U$ is a viscosity solution of $$\det D^2 U = f = 4 + O\!\left( \dfrac{\ln|\ln (x^2+y^2)|}{\ln (x^2+y^2)}\right)\;  \text{in} \; \mathbb{R}^2, $$
 by Theorem \ref{thm:appendix-viscosity} in the Appendix, we see that  $u$ is a viscosity solution of
 $$\sigma_2 (D^2u) = 4 + O\!\left( \dfrac{\ln|\ln (x^2+y^2)|}{\ln (x^2+y^2)}\right)\in C^0(B_2) \;  \text{in} \; \mathbb{R}^n .$$
  But $ u\notin C^{1,1}$ because $D^2u$ is unbounded near the origin.
\medskip

Theorem \ref{Thm_sigma} is proved by approximation.
We construct a sequence of smooth 2-convex solutions $\{v_k\}$ by the solvability of the Dirichlet problem \cite{MR0806416}. A stability estimate (Proposition \ref{prop:stability}) ensures that $v_k$ converges uniformly to the original viscosity solution $u$.

\medskip

The key step is to derive a uniform $C^{1,1}$ bound for $v_k$.
We begin by proving a boundary Jacobi inequality for smooth 2-convex solutions.
This inequality remains valid even in regions where the classical Jacobi inequality fails, because in those regions the linearized operator has a better elliptic structure after a conformal normalization.
However, integration by parts over different regions would typically require handling integrals on their common interface.
The advantage of our boundary Jacobi approach is that it introduces a positive gradient term, which links the integrals on different regions and thereby avoids separate boundary integrals.
This contrasts with the classical method, where the corresponding gradient term in the Jacobi inequality must be strictly negative.

\medskip

To apply the boundary Jacobi inequality, we have to construct suitable cutoff functions.
We invoke Mooney's strict 2-convexity result \cite{MR4246798} to obtain a 2-convex truncation function $w$ that lies uniformly above $u$, and hence above $v_k$ for large $k$, near the origin.
Then a suitable choice of cutoff function is $[(w-v_k)^+]^6$, which leads to a uniform $C^{1,1}$ bound of $v_k$ through a boundary Jacobi approach.
The Evans-Krylov-Safonov theory then provides $C^{2,\alpha}$ estimates, and a compactness argument implies that $v_k$  converges to the original viscosity solution $u$
in the $C^2$ space.

\medskip

The paper is organized as follows. In Section 2, we establish the boundary Jacobi inequality and then derive Pogorelov-type estimates for smooth 2-convex solutions. In Section 3, we construct approximating solutions, prove their uniform convergence, and apply the estimates from Section 2 to establish the uniform $C^{1,1}$ bound, thereby proving Theorem \ref{Thm_sigma}. The Appendix collects some results used in the proof, including a quadratic optimization lemma and the relation between viscosity solution of $k$-Hessian equation and that of Monge-Amp\`ere equation.

\medskip

Throughout this paper, $B_r = B_r(0)$ denotes the open ball of radius $r$ centered at the origin, and $C(\cdots)$ denotes a positive constant depending only on the listed quantities, which may change
from  line to line.

\section{ Jacobi inequality approach for 2-convex solutions}
\label{section:Jacobi}

A function $u$ is called 2-convex if $\Delta u > 0$ and $\sigma_{2} (D^2u)>0$.  In this section
we consider smooth solutions to quadratic Hessian equation in general dimensions $n \geq  2$,
\begin{equation*}
    F(D^2u)=\sigma_{2}(\lambda) = \sum_{1\leq i < j \leq n} \lambda_i \lambda_j = f(x),
\end{equation*}
where $\lambda_i$'s are the eigenvalues of the Hessian matrix $D^2 u$. We also assume that $f(x)$ is  smooth, and   $f \geq f_0$  for a positive constant $f_0>0$.   We work in the positive branch $\Delta u > 0$, which means that the considered solutions $u$ are smooth and   2-convex.    Hence we have
\begin{equation}
    \label{eq:Delta-u-lower}
    (\Delta u)^2 = \sum^n_{i=1} \lambda_i^2 + 2\sigma_2 >  2f_0.
\end{equation}

We denote for $v \in C^2$,
\[ \Delta_{F}v = \sum_{i,j=1}^{n} F_{ij} \partial_{ij}v = \sum_{i,j=1}^{n} \partial_{j}(F_{ij}\partial_{i}v),\qquad  |\nabla_{F}v|^{2}=\sum_{i,j=1}^{n}F_{ij}D_{i}vD_{j}v,\]
where $F_{ij} = \frac{\partial F}{\partial u_{ij}} = \Delta u \,\delta_{ij} - u_{ij}$.

Assuming that $\lambda_1 \geq \cdots \geq \lambda_n$ and $D^2 u$ diagonal at a point, we have $F_{ii} = \Delta u - \lambda_i$ at the point. The following relations from Shankar-Yuan \cite{shankar2025hessian} describe the degeneracy of the linearized operator $\Delta_F$:
\begin{equation}
\label{eq:f_bounds}
\begin{split}
\frac{f_0}{\Delta u}&\leq F_{11}\leq \left(\frac{n-1}{n}\right)\Delta u,\\
\left(1-\frac{1}{\sqrt{2}}\right)\Delta u&\leq F_{ii}\leq 2\left(\frac{n-1}{n}\right)\Delta u,\qquad i\geq 2.
\end{split}
\end{equation}
 The  inequality
\begin{equation}
\label{eq:sigma1_bound}
(n-2)\Delta u>-n\lambda_{n}
\end{equation}
 will be useful.

\subsection{Jacobi Inequality  }

In our previous work \cite{chenjianzhou2024PSCE}, we developed a special integral approach called boundary Jacobi inequality approach that works for convex smooth solutions. Now we want to extend it to general situations.
 We first prove a dynamic Jacobi inequality  with general right hand side function $f(x)$.  In the special case $f\equiv 1$,  such a Jacobi inequality was obtained in Shankar-Yuan \cite[Proposition 2.1]{shankar2025hessian}.  Our results match the dynamic condition in \cite{shankar2025hessian}.

\begin{lemma}
\label{lemma:from_1_to_f}
Let \(u\) be a smooth 2-convex solution to equation \eqref{eq:sigma_lambda} with \(f\in C^{2}, f>0\). Then
\[\Delta_{F}\Delta u-\frac{4}{3}\frac{|\nabla_{F}\Delta u|^{2}}{\Delta u} \geq \Delta f-\frac32\frac{|Df|^{2}}{f} \]
under the condition
\begin{equation}
    \frac{\lambda_n}{\Delta u} + \frac{1}{4} \geq 0.
    \label{eq:semi-convexity}
\end{equation}
\end{lemma}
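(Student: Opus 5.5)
We compute at a point where $D^2u$ is diagonal with $\lambda_1\ge\cdots\ge\lambda_n$, using $F_{ii}=\Delta u-\lambda_i$ and the standard formula for the second derivatives of $\sigma_2$. The plan is to differentiate the equation twice and extract the fourth-order terms through the Laplacian.

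Differentiating $\sigma_2(D^2u)=f$ once in $x_k$ gives $\sum_i F_{ii}u_{iik}=f_k$. Differentiating again in $x_k$ and summing over $k$ gives
\begin{equation*}
\Delta_F\Delta u=\Delta f-\sum_k\sum_{i,j,r,s}\frac{\partial^2\sigma_2}{\partial u_{ij}\partial u_{rs}}u_{ijk}u_{rsk}.
\end{equation*}
For $\sigma_2$ the second derivative is $\delta_{ij}\delta_{rs}-\delta_{is}\delta_{jr}$, so
\begin{equation*}
\Delta_F\Delta u=\Delta f+\sum_k\Big(\sum_{i,j}u_{ijk}^2-(\Delta u_k)^2\Big).
\end{equation*}
The quantity to be bounded from below is therefore
\begin{equation*}
Q:=\sum_k\Big(\sum_{i\ne j}u_{ijk}^2+\sum_i u_{iik}^2-(\Delta u_k)^2\Big)-\frac43\frac{\sum_k F_{kk}(\Delta u_k)^2}{\Delta u}.
\end{equation*}
We need $Q\ge -\frac32|Df|^2/f$. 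In the case $f\equiv1$, Shankar--Yuan show that $Q\ge0$ under \eqref{eq:semi-convexity}.

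Fix $k$. The diagonal third derivatives $t_i=u_{iik}$ satisfy a linear constraint $\sum_i F_{ii}t_i=f_k$, and the off-diagonal terms $u_{ijk}$ with $i\ne j$ are available to help. Following \cite{shankar2025hessian}, for each $k$ I keep the terms $2u_{kkj}^2$ with $j\ne k$, which equal $2u_{jjk}^2$ after relabelling. This leaves a quadratic form $q_k(t)$ in the vector $t$. The problem becomes minimizing $q_k(t)$ over the affine hyperplane $\{\sum F_{ii}t_i=f_k\}$; this is presumably where the quadratic optimization lemma in the Appendix is used. With $f_k=0$ the minimum is $\ge0$ by the Shankar--Yuan computation under \eqref{eq:semi-convexity}. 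With $f_k\ne0$, the minimum of a quadratic form that is nonnegative on the hyperplane $\{\sum F_{ii}t_i=0\}$, taken over the affine hyperplane, equals $-f_k^2/\langle A^{-1}F,F\rangle$-type quantity, where $A$ is the matrix of $q_k$ and $F$ denotes the vector $(F_{ii})$. The expected answer is $\min q_k\ge -c\,f_k^2/f$, using $\sum_i\lambda_iF_{ii}=2\sigma_2=2f$. Summing over $k$ then gives the term $-\frac32|Df|^2/f$.

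The main obstacle is to show that this constrained minimum is at least $-\frac32 f_k^2/f$ exactly, uniformly in the dynamic regime $\lambda_n\ge-\Delta u/4$. I would first try an explicit Lagrange multiplier candidate of the form $t_i=\mu\lambda_i+\nu\Delta u$, since $\sum F_{ii}\lambda_i=2f$. I would also split the quadratic form using Cauchy--Schwarz, $(\Delta u_k)^2\le(1+\epsilon)(\cdots)+(1+\epsilon^{-1})(\cdots)$, absorbing the $f_k$ contribution into a completed square with weight $\frac32$. Where $k=1$ is the direction with degenerate $F_{11}$, I would use the bounds \eqref{eq:f_bounds} and \eqref{eq:sigma1_bound} to control the cross terms.
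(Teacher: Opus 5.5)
\textbf{There is a genuine gap: the proposal stops where the lemma's actual content begins.} Your reduction coincides with the paper's:
\begin{itemize}
\item the identity $\Delta_F\Delta u=\Delta f+\sum_k\bigl(\sum_{i,j}u_{ijk}^2-(\Delta u_k)^2\bigr)$;
\item the regrouping into $n$ decoupled forms $t^TAt-\kappa_k(\sum_j t_j)^2$, with $t=(u_{11k},\dots,u_{nnk})$, $A=\mathrm{diag}(3,\dots,1,\dots,3)$ and $\kappa_k=1+\frac43F_{kk}/\Delta u$ (this regrouping is valid only after summing over $k$, since $u_{kkj}\neq u_{jjk}$ pointwise);
\item minimization over $\sum_jF_{jj}t_j=f_k$ via the Appendix lemma.
\end{itemize}
But you leave the bound $\min\ge-\frac32 f_k^2/f$ as ``the main obstacle,'' and the route you sketch cannot deliver it. In the lemma's notation, the inhomogeneous minimum is $f_k^2(1-\alpha)/(\gamma(1-\alpha)+\beta^2)$ with $1-\alpha<0$. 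Nonnegativity of the homogeneous problem, the $f_k=0$ statement you want to import from Shankar--Yuan, only gives $\gamma(1-\alpha)+\beta^2\ge 0$. A denominator that may tend to $0$ yields no quantitative bound. An explicit Lagrange candidate only bounds the minimum from above. The bounds on $F_{11}$ are beside the point.

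\textbf{The missing step} is to isolate where $\sigma_2=f$ enters and show that this term alone supplies the margin. One computes
\[
\alpha=\frac{(n+2)\kappa_k}{3},\qquad \beta=\sqrt{\kappa_k}\,\frac{(n+1)\Delta u-2\lambda_k}{3}.
\]
Using $\sum_jF_{jj}^2=(n-1)(\Delta u)^2-2f$ (i.e.\ $|\lambda|^2=(\Delta u)^2-2f$), one gets $\gamma=\gamma_0-\frac23 f$ with $\gamma_0=\frac{n+1}{3}(\Delta u)^2-\frac43\lambda_k\Delta u+\frac23\lambda_k^2$. With $y=F_{kk}/\Delta u$, a direct expansion gives
\[
\gamma(1-\alpha)+\beta^2=\frac{(\Delta u)^2y^2}{27}\bigl(10n+2-8ny\bigr)+\frac23(\alpha-1)f .
\]
The dynamic condition gives $0<y\le 1-\lambda_n/\Delta u\le\frac54<\frac{5n+1}{4n}$, so the first term is nonnegative. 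Hence the denominator is at least $\frac23(\alpha-1)f>0$, so the lemma applies, and
\[
\min\ge \frac{f_k^2(1-\alpha)}{\frac23(\alpha-1)f}=-\frac32\,\frac{f_k^2}{f}.
\]
Summing over $k$ finishes the proof. In the degenerate direction $y\to 0$, the $f$-free part vanishes and only the $-\frac23 f$ in $\gamma$ keeps the denominator away from zero. That is why $f$, not $\Delta u$, appears in the final bound, and why no separate treatment of $F_{11}$ is needed.
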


\begin{proof}  For any given $x\in B_2$,
    after a rotation transform at \(p=x\), we may assume that \(D^{2}u(p)\) is diagonal.  It is sufficient to prove the desired inequality at $p$.
    By the assumption, we have
    $$(F_{ij})_{n\times n}={\rm diag}(F_{ii}) = {\rm diag}(\Delta u - \lambda_i),$$ which implies
      the following formulas at \(x=p\):
    \begin{align}
    \label{eq:DeltaF_D}
    \Delta_{F}\Delta u & = \sum_{i,j=1}^n F_{ij} \partial_{ij}\Delta u = \sum_{i=1}^{n} F_{ii} \partial_{ii}\Delta u , \\
    \label{eq:nablaF_D}
    |\nabla_{F} \Delta u|^{2} & = \sum_{i,j=1}^n F_{ij} (\Delta u_i) (\Delta u_j) =  \sum_{i=1}^{n} F_{ii} (\Delta u_{i})^{2}.
    \end{align}
    We want to replace the fourth order derivatives \(\partial_{ii}\Delta u=\sum_{k=1}^{n}\partial_{ii}u_{kk}\) in \eqref{eq:DeltaF_D} by third derivatives.
    For this purpose,
    differentiate equation \eqref{eq:sigma_lambda} with respect to $x_k$, we have
    \begin{equation}
        \label{eq:diff-main-once}
        \sum^n_{i,j=1}F_{ij}u_{ijk} = \sum_{i=1}^n F_{ii} u_{iik} = f_k
    \end{equation}
    for each $1\le k\le n$. Furthermore,
    differentiate equation \eqref{eq:diff-main-once} with respect to $x_k$, we have
    $$ \sum^n_{i,j=1}\partial_k F_{ij} \partial_{ij}u_k+\sum^n_{i,j=1}F_{ij}\partial_{ij}u_{kk}=f_{kk}. $$
    Summing up the equality for $k$ from 1 to $n$,  we get
    \begin{align*}
        \sum^n_{i,j=1}F_{ij}\partial_{ij}\Delta u & = \Delta f -\sum^n_{i,j,k=1}\partial_k F_{ij}\partial_{ij}u_k
        = \Delta f - \sum^n_{i,j,k=1}(\Delta u_k\delta_{ij}-u_{ijk})u_{ijk} \\
        & = \Delta f +\sum^n_{i,j,k=1}u^2_{ijk}-\sum^n_{i=1}(\Delta u_i)^2\geq \Delta f + 3\sum^n_{j\neq i}u^2_{jji}+\sum^n_{i=1}u^2_{iii}-\sum^n_{i=1}(\Delta u_i)^2.
    \end{align*}
    So we obtain
    \begin{equation}
    \label{eq:DeltaF_convex}
    \begin{aligned}
    \Delta_F\Delta u -\delta \frac{|\nabla_F \Delta u|^2}{\Delta u}
    & \ge \Delta f + 3\sum^n_{\substack{i,j=1 \\ i\neq j}}u^2_{jji}+\sum^n_{i=1}u^2_{iii}-\sum^n_{i=1} (1+\delta\frac{F_{ii}}{\Delta u})(\Delta u_i)^2 \\
    & = \Delta f + \sum_{i=1}^n \left( 3\sum^n_{\substack{j=1\\ j\neq i}}u^2_{jji} + u^2_{iii}- \left(\sum_{j=1}^n \sqrt{k_i} u_{jji}\right)^2\right),
    \end{aligned}
    \end{equation}
    where $k_i = 1 + \delta \frac{F_{ii}}{\Delta u}$ at $x=p$, and $\delta >1$ and will be determined later.
    We fix $i$ and denote  \(t=t_{i}=(u_{11i},\ldots,u_{nni})^T\) at $x=p$ and $k=k_i$. Consider an optimization problem:
    \[ Q_{min}:=
    \min \{ Q(y): y \in \mathbb{R}^n \   \text{s.t.}  \quad d^T y = c\}, \ \ \  \  Q(y) = y^T A y - (b^T y)^2,
    \]
    where $A = \operatorname{diag(3,\cdots,\overset{(i)}1,\cdots,3)}, b = \sqrt{k}(1,\cdots,1)^T, d = (F_{11}, \cdots, F_{nn})^T$ and $c = f_i$, all of which take values at $x=p$.
    Our goal is to prove \(Q(t)\geq -C(n)\frac{|Df(p)|^2 }{f(p)}\), which will follow from \(Q_{\min}\geq -C(n)|Df(p)|^2/f(p) \).

    By Lemma \ref{lemma:optim}, we choose
    \begin{align*}
        \alpha &= b^TA^{-1}b = \frac{(n+2)k}{3}, \\
        \beta &= b^T A^{-1} d = \sqrt{k} \cdot \frac{(n+1)\Delta u - 2 \lambda_i}{3}, \\
        \gamma &= d^T A^{-1} d = \frac{n+1}{3}(\Delta u)^2 - \frac{4}{3} \Delta u \lambda_i + \frac{2}{3} \lambda_i^2 - \frac{2}{3} f.
    \end{align*}
    From Lemma \ref{lemma:optim}, we  see that  $Q$ is  bounded from below  if $\beta^2 - \gamma(\alpha-1)>0$,  which is equivalent to
    $$ k \left(\frac{(n+1)\Delta u - 2 \lambda_i}{3} \right)^2 - \left[\frac{n+1}{3}(\Delta u)^2 - \frac{4}{3} \Delta u \lambda_i + \frac{2}{3} \lambda_i^2 - \frac{2}{3} f\right] \cdot \left( \frac{(n+2)k}{3}-1 \right)>0 $$
    at $x=p$. Since $f>0$ and $k>1$, it suffices to make sure
    \begin{equation}
    \label{eq:k_criterion_no_f}
         k \left(\frac{(n+1)\Delta u - 2 \lambda_i}{3} \right)^2- \left[\frac{n+1}{3}(\Delta u)^2 - \frac{4}{3} \Delta u \lambda_i + \frac{2}{3} \lambda_i^2 \right] \cdot \left( \frac{(n+2)k}{3}-1 \right)>0.
    \end{equation}
    Let $y = \frac{F_{ii}}{\Delta u}>0$, then $y>0$. Substituting $k = 1+\delta y$ and $\lambda_i = (1-y) \Delta u$ in   the above inequality,    it  is equivalent to
    \begin{equation}
        q_\delta(y) = -2n\delta y^2 + \big[6-2n + 4(n-1)\delta \big]y + (n-1)(4-3\delta) >0.
        \label{eq:q_delta(y)}
    \end{equation}
    Choose $\delta = \frac{4}{3}$, then $ q_\delta(y)>0$ when $y\in (0,\frac{5n+1}{4n})$, which  can be guaranteed by the condition (6), since
    \[
    0<y=\frac{F_{ii}}{\Delta u}\leq \frac{\Delta u-\lambda_n}{\Delta u}\le \frac{5}{4}< \frac{5n+1}{4n}.
    \]

    Now we compute the lower bound of $Q$. From Lemma \ref{lemma:optim}, since we have $1-\alpha + \frac{\beta^2}{\gamma}>0$, the minimum is $ Q_{\min} = \frac{c^2(1-\alpha)}{\gamma(1-\alpha) + \beta^2}$. Notice that
    \begin{eqnarray*}
        \gamma(1-\alpha)+\beta^2
       & = & \frac{(\Delta u)^2y}{9} \cdot q_{\frac{4}{3}}(y)+\frac{2}{3}\left( \frac{(n+2)k}{3}-1 \right){f(p)}
        \geq \frac{2}{3}(\alpha-1)f(p)>0,
    \end{eqnarray*}
    where the inequality comes from \eqref{eq:k_criterion_no_f}.
    Then
    $$ Q_{\min} = \frac{c^2(1-\alpha)}{\gamma(1-\alpha) + \beta^2} \geq \frac{f_i^2(1-\alpha)}{\frac{2}{3}(\alpha-1)f} \geq -\frac{3}{2} \frac{|Df(p)|^2}{f(p)}.   $$
    This completes the proof.
\end{proof}

We next prove a lower bound for $\Delta_F\Delta u $ without the dynamic semi-convexity condition \eqref{eq:semi-convexity}.

\begin{lemma}
 \label{lemma:Jacobi_only_Delta_F}
Let \(u\) be a smooth 2-convex solution to equation \eqref{eq:sigma_lambda} with \(f\in C^{2}, f >0\). Then
\[ \Delta_{F}\Delta u \geq \Delta f-\frac{3}{2}\frac{|Df|^{2}}{f}. \]
\end{lemma}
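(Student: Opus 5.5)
The plan is to rerun the proof of Lemma \ref{lemma:from_1_to_f} with $\delta = 0$. Dropping the gradient term removes the need for the dynamic semi-convexity condition \eqref{eq:semi-convexity}. The fourth-order computation there does not use \eqref{eq:semi-convexity}, so at a point $p$ where $D^2u$ is diagonal it still gives
\[
\Delta_F\Delta u \ge \Delta f + \sum_{i=1}^n\Big(3\sum_{j\neq i}u_{jji}^2 + u_{iii}^2 - \Big(\sum_{j=1}^n u_{jji}\Big)^2\Big).
\]
This is \eqref{eq:DeltaF_convex} with $k_i = 1$. For each fixed $i$ the bracket is $Q(t_i)$, where $A=\mathrm{diag}(3,\dots,1,\dots,3)$, $b=(1,\dots,1)^T$, and $t_i$ satisfies the linear constraint $d^Tt_i = f_i$ from \eqref{eq:diff-main-once}, with $d=(F_{11},\dots,F_{nn})^T$. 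It then suffices to show $Q_{\min}\ge -\frac32 f_i^2/f$ and sum over $i$.

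Next I apply Lemma \ref{lemma:optim} with $k=1$. This gives $\alpha = \frac{n+2}{3}$, $\beta = \frac{(n+1)\Delta u - 2\lambda_i}{3}$, and $\gamma = \frac{n+1}{3}(\Delta u)^2 - \frac43\Delta u\lambda_i + \frac23\lambda_i^2 - \frac23 f$. The key quantity is
\[
\beta^2 - \gamma(\alpha-1) = \frac{(\Delta u)^2 y}{9}\, q_0(y) + \frac23(\alpha-1)f,
\]
with $y = F_{ii}/\Delta u$. This is the same algebraic identity used in the previous proof, now with $\delta=0$. From \eqref{eq:q_delta(y)},
\[
q_0(y) = (6-2n)y + 4(n-1).
\]

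The main obstacle is checking that $q_0(y)\ge 0$ on the full admissible range of $y$ with no semi-convexity assumption. Since $\lambda_i > \lambda_n$, \eqref{eq:sigma1_bound} gives
\[
y = \frac{\Delta u-\lambda_i}{\Delta u} \le 1 - \frac{\lambda_n}{\Delta u} < 1 + \frac{n-2}{n} = \frac{2(n-1)}{n}.
\]
For $n\le 3$ the coefficient $6-2n$ is nonnegative, so $q_0(y)>0$ for all $y>0$. For $n\ge 4$, $q_0$ is decreasing, so it suffices to evaluate it at the endpoint:
\[
q_0\Big(\tfrac{2(n-1)}{n}\Big) = 4(n-1) - \frac{4(n-3)(n-1)}{n} = \frac{12(n-1)}{n} > 0.
\]
Hence $q_0(y)>0$ for every $y\in(0,\frac{2(n-1)}{n})$, and therefore
\[
\gamma(1-\alpha)+\beta^2 \ge \tfrac23(\alpha-1)f > 0.
\]
I would double-check the positivity of $q_0$ in the borderline case $n=4$ against the exact bound from \eqref{eq:sigma1_bound}, and also confirm that the hypothesis $\gamma>0$ required by Lemma \ref{lemma:optim} holds.

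Given these, the minimum formula from Lemma \ref{lemma:optim} yields
\[
Q_{\min} = \frac{f_i^2(1-\alpha)}{\gamma(1-\alpha)+\beta^2} \ge \frac{f_i^2(1-\alpha)}{\frac23(\alpha-1)f} = -\frac32\frac{f_i^2}{f}.
\]
Summing over $i$ gives $\Delta_F\Delta u \ge \Delta f - \frac32 |Df|^2/f$ at $p$. Since $p$ was arbitrary, the inequality holds everywhere.
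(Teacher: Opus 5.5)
Your proposal is correct and is essentially the paper's proof. Both reduce to the constrained minimization of Lemma \ref{lemma:optim} with $b=(1,\dots,1)^T$ and the same $\alpha,\beta,\gamma$; the only difference is that you use $y=F_{ii}/\Delta u=1-z$ in place of the paper's $z=\lambda_i/\Delta u$, so the paper's $q(z)=-(n-3)z^2-4z+n+1$ becomes $\tfrac12 y\,q_0(y)$, and your endpoint value $\tfrac{12(n-1)}{n}$ matches the paper's $q(-\tfrac{n-2}{n})=\tfrac{12(n-1)^2}{n^2}$. The two points you flagged are harmless: $\gamma>0$ is a conclusion of Lemma \ref{lemma:optim}, since $A>0$ and $d\neq 0$ (because $F_{ii}>0$), and $\tfrac{12(n-1)}{n}>0$ for every $n\ge 2$, including $n=4$.
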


\begin{proof}  For any given $x\in B_2$, as in the beginning of the proof of Lemma 2.1, we
    assume $D^2u$ is diagonal at $x = p$. Then the same computation as \eqref{eq:DeltaF_convex} leads to
    $$ \Delta_F\Delta u \geq  \Delta f + \sum_{i=1}^n \left( 3\sum^n_{\substack{j=1\\ j\neq i}}u^2_{jji} + u^2_{iii}- \left(\sum_{j=1}^n u_{jji}\right)^2\right). $$
    Fix $1\le i\le n$, consider an optimization problem:
    \[ \widetilde{Q}_{\min}=
    \min\{ \ \widetilde{Q}(t): t \in \mathbb{R}^n  \quad  \text{s.t.}  \quad d^T t = c\},\ \ \  \widetilde{Q}(t)= t^T A t - (b^T t)^2 ,
    \]
    where $A = \operatorname{diag(3,\cdots,\overset{(i)}1,\cdots,3)}, b = (1,\cdots,1)^T, d = (F_{11}, \cdots, F_{nn})^T$ and  $c = f_i$, all of which take values at $x=p$.
    The goal is to show that \( \widetilde{Q} \geq -C(1)|Df(p)|^2/f(p) \).
    By Lemma \ref{lemma:optim}, we have
    \begin{align*}
        \alpha &= b^TA^{-1}b = \frac{n+2}{3}, \\
        \beta &= b^T A^{-1} d = \frac{(n+1)\Delta u - 2 \lambda_i}{3}, \\
        \gamma &= d^T A^{-1} d = \frac{n+1}{3}(\Delta u)^2 - \frac{4}{3} \Delta u \lambda_i + \frac{2}{3} \lambda_i^2 - \frac{2}{3} f.
    \end{align*}
    By Lemma \ref{lemma:optim}, $\widetilde{Q}$ is bounded below provided $\beta^2 > \gamma(\alpha-1)$, which is
    $$  \left(\frac{(n+1)\Delta u - 2 \lambda_i}{3} \right)^2 - \left[\frac{n+1}{3}(\Delta u)^2 - \frac{4}{3} \Delta u \lambda_i + \frac{2}{3} \lambda_i^2 - \frac{2}{3} f\right] \cdot \frac{n-1}{3}>0. $$
    Since $f>0$, it suffices to make sure
    \begin{equation}
        \label{eq:criterion_no_f}
        \left(\frac{(n+1)\Delta u - 2 \lambda_i}{3} \right)^2 - \left[\frac{n+1}{3}(\Delta u)^2 - \frac{4}{3} \Delta u \lambda_i + \frac{2}{3} \lambda_i^2  \right] \cdot \frac{n-1}{3}>0.
    \end{equation}
    Denote $z = \frac{\lambda_i}{\Delta u}$. By  \eqref{eq:sigma1_bound}, we  see that $z   \in(-\frac{n-2}{n},1)$. Then inequality \eqref{eq:criterion_no_f} is equivalent to
    $$ q(z) := -(n-3) z^2 -4 z + n+1 >0. $$
    As $ q(-\frac{n-2}{n}) = \frac{12(n-1)^2}{n^2} >0$ and $q(1)=0$, we get $q(z)>0$ for all $z\in (-\frac{n-2}{n},1)$.

    From Lemma \ref{lemma:optim}, since $1-\alpha + \frac{\beta^2}{\gamma}>0$, we have $ \widetilde{Q}_{\min} = \frac{c^2(1-\alpha)}{\gamma(1-\alpha) + \beta^2}$. Notice that
    \begin{equation*}
        \gamma(1-\alpha)+\beta^2
        = \frac{2}{9}(\Delta u)^2 \cdot q(z) + \frac{2(n-1)}{9} f \geq  \frac{2(n-1)}{9} f,
    \end{equation*}
    where the last inequality comes from \eqref{eq:criterion_no_f}.
    Since $1-\alpha = \frac{1-n}{3}$, we thus have
    $$ \widetilde{Q}_{\min} \geq -\frac{3}{2f(p)} c^2 \geq -\frac{3}{2f(p)} |Df(p)|^2.   $$
    This completes the proof.
\end{proof}

Now we are ready to prove the key proposition of this section, namely a boundary Jacobi inequality without assuming any additional convexity condition of the solution.

\subsection{Boundary Jacobi Inequality }

Take the function
\begin{eqnarray*}
    b = \Delta u + A + e^{|Du|^2},
\end{eqnarray*}
where $A = A(n,\|u\|_{C^{0,1}(B_1)})$ is a positive constant to be determined later.

\begin{prop}[Boundary Jacobi inequality]
\label{lemma:Boundary_Jacobi}
Let $u$ be a smooth 2-convex solution to equation \eqref{eq:sigma_lambda} in $B_2$
and $\Omega \subset B_1$ be a connected open subset in $B_1$.   Then for any $\phi\in C^2(\Omega), \phi>0 $ in $\Omega$, we have
\begin{eqnarray}
\label{eq:boundary-Jacobi-main}
\Delta_F (\phi^6 b) + \frac{1}{2} \frac{|\nabla_F(\phi^6 b)|^2}{\phi^6 b}
\ge 6 \phi^{5} b \Delta_F \phi + \phi^6 \Delta f - C\Delta u\;\; \text{in }\;\; \Omega,
\end{eqnarray}
where $C = C(n, f_0, \|f\|_{C^{0,1}(B_1)}, \|u\|_{C^{0,1}(B_1)},\|\phi\|_{C^{0,1}(\Omega)})$.
\end{prop}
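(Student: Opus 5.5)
The plan is to expand both sides of \eqref{eq:boundary-Jacobi-main} with the product rule and reduce everything to pointwise inequalities at a point $p$ where $D^2u$ is diagonal. Write $\psi=\phi^6$. Then
\[
\Delta_F(\psi b)=b\Delta_F\psi+\psi\Delta_F b+2\langle\nabla\psi,\nabla b\rangle_F,\qquad
\Delta_F\psi=6\phi^5\Delta_F\phi+30\phi^4|\nabla_F\phi|^2 .
\]
The term $6\phi^5b\Delta_F\phi$ appears on the right of \eqref{eq:boundary-Jacobi-main}, and $30\phi^4 b|\nabla_F\phi|^2$ is a positive reserve. The gradient term expands as
\[
\frac12\frac{|\nabla_F(\psi b)|^2}{\psi b}=\frac12\frac{b|\nabla_F\psi|^2}{\psi}+\langle\nabla\psi,\nabla b\rangle_F+\frac12\frac{\psi|\nabla_F b|^2}{b}.
\]
So the left side equals
\[
6\phi^5b\Delta_F\phi+48\phi^4 b|\nabla_F\phi|^2+3\langle\nabla\psi,\nabla b\rangle_F+\psi\Bigl(\Delta_F b+\tfrac12\tfrac{|\nabla_F b|^2}{b}\Bigr).
\]
The cross term is handled by Cauchy--Schwarz:
\[
3|\langle\nabla\psi,\nabla b\rangle_F|=18\phi^5|\langle\nabla\phi,\nabla b\rangle_F|\le 48\phi^4 b|\nabla_F\phi|^2+\tfrac{27}{16}\phi^6\tfrac{|\nabla_F b|^2}{b}.
\]
It therefore suffices to prove the pointwise inequality
\[
\Delta_F b-\tfrac{19}{16}\tfrac{|\nabla_F b|^2}{b}\ge \Delta f-C\Delta u ,
\]
using $\phi\le C$ to absorb constants. (The exponent $6$ was evidently chosen so that this coefficient is a little below $4/3$.) Here the positive coefficient $+\frac12$ in the statement is exactly what leaves room.

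Next I would compute $\Delta_F b$ with $b=\Delta u+A+e^{|Du|^2}$. Differentiating once gives
\[
\Delta_F e^{|Du|^2}=e^{|Du|^2}\Bigl(2\sum_k F_{ii}u_{ik}^2+2\sum_k u_k f_k+4|\nabla_F\tfrac12|Du|^2|^2\Bigr),
\]
and $\sum_i F_{ii}\lambda_i^2=3\sigma_3$-type terms; more simply $\sum_iF_{ii}\lambda_i^2\ge c(n)(\Delta u)^3$ up to lower order via \eqref{eq:f_bounds}, since $\lambda_1\sim\Delta u$ and $F_{11}\ge$ something. The cleaner route is that the $e^{|Du|^2}$ part contributes a good positive term $e^{|Du|^2}\sum_iF_{ii}\lambda_i^2\ge c(\Delta u)\sum_{i\ge2}\lambda_i^2+F_{11}\lambda_1^2$, together with $|\nabla_F|Du|^2|^2$ to control its own gradient. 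For the $\Delta u$ part I would split into two cases.

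\emph{Case 1: the semi-convexity condition \eqref{eq:semi-convexity} holds at $p$.} Lemma \ref{lemma:from_1_to_f} gives
\[
\Delta_F\Delta u\ge\tfrac43\tfrac{|\nabla_F\Delta u|^2}{\Delta u}+\Delta f-C.
\]
Since $|\nabla_F b|^2\le(1+\epsilon)|\nabla_F\Delta u|^2+C_\epsilon|\nabla_F e^{|Du|^2}|^2$ and $b\ge\Delta u$, the margin between $4/3$ and $19/16$ absorbs the $\Delta u$ gradient. The $e^{|Du|^2}$ gradient, of size $e^{2|Du|^2}|Du|^2F_{kk}\lambda_k^2$, must be beaten by $e^{|Du|^2}F_{kk}\lambda_k^2$ divided by $b$. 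Choosing $A$ large relative to $e^{\|Du\|^2_\infty}$ makes $e^{|Du|^2}/b\le e^{|Du|^2}/A$ small, which settles this.

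\emph{Case 2: $\lambda_n<-\Delta u/4$.} This case is the main obstacle. Here only Lemma \ref{lemma:Jacobi_only_Delta_F} is available, giving $\Delta_F\Delta u\ge\Delta f-C$ with no gradient gain. But $\lambda_n\le-\Delta u/4$ forces $\sigma_2>0$ to push $F_{11}\ge c\Delta u$, i.e. all $F_{ii}\sim\Delta u$ (uniform ellipticity after normalizing by $\Delta u$), and $\sum_iF_{ii}\lambda_i^2\ge c(\Delta u)^3$. I would try to bound the gradient $|\nabla_F\Delta u|^2/b$ by the good third-derivative terms $\sum u_{ijk}^2-\sum(\Delta u_i)^2$ retained in \eqref{eq:DeltaF_convex}. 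That is, I would rerun the quadratic optimization of Lemma \ref{lemma:Jacobi_only_Delta_F} with $k_i=1+\delta F_{ii}/\Delta u$ and small $\delta=19/16\cdot(1+\epsilon)$, checking positivity of the analogous quadratic using $z=\lambda_i/\Delta u\in(-\frac{n-2}{n},1)$ together with the region $y\le 5/4$ failing only for $i=n$. If that fails for $i=n$, the fallback is to absorb $|\nabla_F\Delta u|^2$ in direction $n$ using the huge positive term $e^{|Du|^2}F_{nn}\lambda_n^2\ge c(\Delta u)^3$. The resulting leftover error is only $C\Delta u$ (from $u_kf_k$, $\Delta f$, and $F$-traces of order $\Delta u$), matching the allowed $-C\Delta u$.
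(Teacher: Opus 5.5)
Your expansion of the left side is correct, and so is the reduction to $\Delta_F b-\frac{19}{16}\frac{|\nabla_F b|^2}{b}\ge \Delta f-C\Delta u$ as a sufficient condition. Your Case 1 is essentially the paper's Step 1: Lemma \ref{lemma:from_1_to_f} applies because $\frac{19}{16}(1+\epsilon)\le\frac43$, and $A$ is taken large to absorb the $e^{|Du|^2}$ gradient. The gap is in Case 2, and your route cannot close it.

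\textbf{Why Case 2 fails.} You used the whole reserve $48\phi^4 b|\nabla_F\phi|^2$ on the cross term. You therefore need a $\phi$-independent pointwise inequality, with coefficient $\frac{19}{16}$, also where $\lambda_n<-\frac14\Delta u$.
\begin{itemize}
\item Fix $D^2u$ at a point. The left side is then a quadratic polynomial in the third derivatives. The $e^{|Du|^2}$ part of $b$ contributes only second-order quantities and terms linear in $D^3u$.
\item The quadratic part is $\sum u_{ijk}^2-\sum_i\bigl(1+\delta_{\mathrm{eff}}\frac{F_{ii}}{\Delta u}\bigr)(\Delta u_i)^2$, restricted to $\sum_j F_{jj}u_{jji}=f_i$. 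Here $\delta_{\mathrm{eff}}=\frac{19}{16}\frac{\Delta u}{b}\to\frac{19}{16}$ as $\Delta u\to\infty$.
\item By the paper's computation, for $i=n$ the sign is governed by $q_{\delta}(y)$ with $y=F_{nn}/\Delta u$. One checks $q_\delta\bigl(\frac{2(n-1)}{n}\bigr)=3(n-1)\bigl(\frac4n-\delta\bigr)$, which is negative for $\delta$ near $\frac{19}{16}$ once $n\ge 4$.
\item $y\to\frac{2(n-1)}{n}$ exactly when $\lambda_n/\Delta u\to-\frac{n-2}{n}$. Such Hessians are admissible at a point, for example eigenvalues $(a,\dots,a,-\mu)$ with $\sigma_2=f$ and $a$ large.
\item Lemma \ref{lemma:optim} (case $\eta<0$) then shows the left side is unbounded below over the admissible third derivatives. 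No pointwise argument can give your inequality there.
\end{itemize}

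Your fallback does not help. The term $e^{|Du|^2}F_{nn}\lambda_n^2\ge c(\Delta u)^3$ depends only on $D^2u$. The bad term $F_{nn}(\Delta u_n)^2/b$ is quadratic in third derivatives that are constrained by a single linear relation.

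The obstruction is structural. With $\phi^m$ in place of $\phi^6$, your bookkeeping always demands the coefficient $\frac{3m+1}{3m-2}>1$. That is a genuine Jacobi inequality without semi-convexity, which is exactly what fails for large $n$ (Shankar--Yuan). For $n\le 3$ your optimization would actually close, but the statement is for all $n$.

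\textbf{The missing idea.} In the bad region the inequality must genuinely use $\phi$. The paper splits further according to $\Omega_2=\{|\nabla_F\varphi|/\varphi\le\frac16|\nabla_F b|/b\}$, with $\varphi=\phi^6$.
\begin{itemize}
\item On $\Omega_2$, $3|\langle\nabla\varphi,\nabla b\rangle_F|\le\frac{\varphi}{2b}|\nabla_F b|^2$. So the cross term is absorbed by the $+\frac12$ gradient term itself, and only the gradient-free Lemma \ref{lemma:Jacobi_only_Delta_F} is needed.
\item On $\Omega_1^c\cap\Omega_2^c$, the third-derivative term is traded for the cutoff: $\frac{|\nabla_F b|^2}{b}\le 36\,b\,\frac{|\nabla_F\varphi|^2}{\varphi^2}\le C(\Delta u)^2\varphi^{-1/3}$.
\item Only at this stage does the cubic term from $\Delta_F e^{|Du|^2}$ help. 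Writing $X=\Delta u\,\varphi^{1/3}$, one has $c_1(\Delta u)^3\varphi-C_2(\Delta u)^2\varphi^{2/3}=c_1X^3-C_2X^2\ge -C$.
\end{itemize}
Your uniform Cauchy--Schwarz on the cross term discards precisely this comparison between $\nabla_F b$ and $\nabla_F\varphi$, and that comparison is what makes the boundary Jacobi inequality work.
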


\begin{proof}
    Denote $\varphi := \phi^6, J:= \Delta_F (\varphi b) + \frac{1}{2} \frac{|\nabla_F(\varphi b)|^2}{\varphi b}$ and
    \begin{eqnarray*}
    \Omega_1&:=&\left\{x\in \Omega \mid  \frac{\lambda_n}{\Delta u} + \frac{1}{4}\ge 0 \right\}, \\
    \Omega_2 &:=& \left\{ x\in \Omega \mid \frac{|\nabla_F \varphi|}{\varphi} \leq \frac{1}{6} \frac{|\nabla_F b|}{b} \right\}.
    \end{eqnarray*}
    We will prove the desired inequality \eqref{eq:boundary-Jacobi-main} in $\Omega_1$, $\Omega_2$ and $\Omega_1^c\cap \Omega_2^c$ respectively.

    \textbf{Step 1:} In $\Omega_1$.
    Direct computation shows that
    \begin{equation}
    \label{eq:nabla_F-varphi-b}
        \begin{aligned}
            \Delta_F (\varphi b) & = \varphi \Delta_F b + 2 \sum_{i,j=1}^nF_{ij} \varphi_i b_j + b\Delta_F \varphi \\
            & \geq \varphi \Delta_F b - \frac{5}{6} \frac{|\nabla_F\varphi|^2}{\varphi} b - \frac{6}{5} \frac{|\nabla_F b|^2}{b}\varphi + b\Delta_F \varphi \\
            & = b \left( \Delta_F \varphi - \frac{5}{6} \frac{|\nabla_F\varphi|^2}{\varphi} \right) + \varphi \left( \Delta_F b - \frac{6}{5} \frac{|\nabla_F b|^2}{b} \right).
        \end{aligned}
    \end{equation}
    Since $\varphi = \phi^6$, we have
    \begin{equation}
    \label{eq:nabla_F-varphi}
        \Delta_F \varphi - \frac{5}{6} \frac{|\nabla_F\varphi|^2}{\varphi} = 6 \phi^{5} \Delta_F \phi.
    \end{equation}
    Young's inequality yields
    \begin{equation*}
        |\nabla_F b|^2 = |\nabla_F(\Delta u + e^{|Du|^2})|^2 \leq \left(1+\frac{1}{9}\right)|\nabla_F \Delta u|^2 +  C(n)  |\nabla_F e^{|Du|^2}|^2,
    \end{equation*}
    and thus
    \begin{equation*}
        \Delta_F b - \frac{6}{5} \frac{|\nabla_F b|^2}{b} \geq \Delta_F \Delta u + \Delta_F e^{|Du|^2} - \frac{4}{3} \frac{|\nabla_F \Delta u|^2}{\Delta u} - C(n) \frac{|\nabla_F e^{|Du|^2}|^2}{A}.
    \end{equation*}
    In $\Omega_1$, we can use   Lemma \ref{lemma:from_1_to_f}. As a result, we have
    \begin{equation*}
        \Delta_F \Delta u - \frac{4}{3} \frac{|\nabla_{F}\Delta u|^{2}}{\Delta u} \geq \Delta f-C(n,f_0,\|f\|_{C^{0,1}}) \quad \text{ in } \Omega_1.
    \end{equation*}
    So we have
    $$
    \Delta_F b - \frac{6}{5} \frac{|\nabla_F b|^2}{b} \geq \Delta f-C(n,f_0,\|f\|_{C^{0,1}(B_2)}) + \Delta_F e^{|Du|^2} - C(n) \frac{|\nabla_F e^{|Du|^2}|^2}{A}.
    $$
    Notice that
    $$ |\nabla_F e^{|Du|^2} |^2 = e^{2|Du|^2} |\nabla_F|Du|^2|^2. $$
     Using \eqref{eq:diff-main-once}, we have
    \begin{eqnarray}
        \Delta_F e^{|Du|^2} &=& 2 e^{|Du|^2} \sum_{i,j,k} F_{ij} u_{ki} u_{kj} + 2 e^{|Du|^2} \sum_{i,j,k} u_k F_{ij} u_{ijk} + e^{|Du|^2} |\nabla_F|Du|^2|^2 \notag \\
        &\geq & -C(\|u\|_{C^{0,1}(B_2)}, \|f\|_{C^{0,1}(B_2)}) + e^{|Du|^2} |\nabla_F|Du|^2|^2. \label{eq:Delta_F-exp}
    \end{eqnarray}
    Therefore, taking $A = A(n,\|u\|_{C^{0,1}(B_1)})$ large enough, we have
    \begin{eqnarray}\label{eq:nabla_F-b}
        \Delta_F b - \frac{6}{5} \frac{|\nabla_F b|^2}{b} \geq \Delta f - C(n,f_0,\|f\|_{C^{0,1}(B_2)}, \|u\|_{C^{0,1}(B_2)}).
    \end{eqnarray}
    So, in $\Omega_1$, by \eqref{eq:nabla_F-varphi-b} \eqref{eq:nabla_F-varphi} \eqref{eq:nabla_F-b} we have thus proved that
    \begin{equation*}
        \begin{aligned}
            J &\geq 6 \phi^{5} b \Delta_F \phi + \varphi\big[\Delta f - C(n, f_0, \|f\|_{C^{0,1}(B_2)}, \|u\|_{C^{0,1}(B_2)}) \big] \\
            & \geq 6 \phi^{5} b \Delta_F \phi + \varphi \Delta f - C(n, f_0, \|f\|_{C^{0,1}(B_2)}, \|u\|_{C^{0,1}(B_2)},\|\phi\|_{L^{\infty}(B_2)}).
        \end{aligned}
    \end{equation*}

    \textbf{Step 2:} In $\Omega_2$. We have
    $$ |\sum_{i,j=1}^nF_{ij}\varphi_ib_j| \leq |\nabla_F \varphi| \cdot |\nabla_F b| \leq \frac{\varphi}{6b}  |\nabla_F b|^2 , $$
   where the last inequality is due to the condition of  $\Omega_2$. This implies
    \begin{eqnarray*}
        J &=& \varphi \Delta_F b + 2 \sum_{i,j=1}^nF_{ij} \varphi_i b_j + b\Delta_F \varphi + \frac{1}{2} \left( \frac{b}{\varphi}|\nabla_F \varphi|^2 + 2\sum_{i,j=1}^nF_{ij} \varphi_i b_j + \frac{\varphi}{b}|\nabla_F b|^2  \right) \\
        &\geq & \varphi \Delta_F b + b\Delta_F \varphi + 3 \sum_{i,j=1}^nF_{ij} \varphi_i b_j + \frac{\varphi}{2b} |\nabla_F b|^2 \\
        &\geq & \varphi \Delta_F b + b\Delta_F \varphi.
    \end{eqnarray*}
    From \eqref{eq:nabla_F-varphi}, we know $\Delta_F \varphi \geq 6\phi^{5} \Delta_F \phi$.  On the other hand,
    combining Lemma \ref{lemma:Jacobi_only_Delta_F} and \eqref{eq:Delta_F-exp}, we know
    $$ \Delta_F b = \Delta_F \Delta u + \Delta_F e^{|Du|^2} \geq \Delta f - C(n, f_0, \|u\|_{C^{0,1}(B_2)}, \|f\|_{C^{0,1}(B_2)}) $$
    So, in $\Omega_2$, we have proved
    $$ J \geq 6 \phi^{5} b \Delta_F \phi + \varphi \Delta f - C(n, f_0, \|f\|_{C^{0,1}(B_2)}, \|u\|_{C^{0,1}(B_2)},\|\phi\|_{L^{\infty}(B_2)}) . $$

    \textbf{Step 3:} In $\Omega_1^c \cap \Omega_2^c $.
    Using \eqref{eq:nabla_F-varphi-b} and \eqref{eq:nabla_F-varphi}, we have
    $$ J \geq 6 \phi^{5} b \Delta_F \phi +\varphi \left( \Delta_F \Delta u + \Delta_F e^{|Du|^2} - \frac{6}{5} \frac{|\nabla_F b|^2}{b} \right). $$
    From Lemma \ref{lemma:Jacobi_only_Delta_F}, we have $\Delta_F\Delta u \geq \Delta f - C|Df|^2$.
    To estimate $\Delta_F e^{|Du|^2}$, we may assume $D^2u(x)$ is diagonal at a given point. This, together with (9), yields
    \begin{eqnarray*}
        \Delta_F e^{|Du|^2} &=& 2 e^{|Du|^2} \sum_{i,j,k} F_{ij} u_{ki} u_{kj} + 2 e^{|Du|^2} \sum_{i,j,k} u_k F_{ij} u_{ijk} +  e^{|Du|^2} |\nabla_F|Du|^2|^2 \\
        &\geq& c_1(n) (\Delta u)^3 - C_1(\|u\|_{C^{0,1}(B_2)}, \|f\|_{C^{0,1}(B_2)}),
    \end{eqnarray*}
    where we have used
    \begin{eqnarray*}
        \sum_{i,j,k} F_{ij} u_{ki} u_{kj} = \sum_{i=1}^n (\Delta u -\lambda_i) \lambda_i^2 \geq (\Delta u - \lambda_n)\lambda_n^2 \geq c_1(n) (\Delta u )^3,
    \end{eqnarray*}
    where the last inequality follows from the condition of $\Omega_1^c $. On the other hand,  from the definition of $\Omega_2^c$, we have
    $$ \frac{|\nabla_F b|^2}{b} \leq 36\, b\, \frac{|\nabla_F\varphi|^2}{\varphi^2} \leq C_2(n, f_0,\|\phi\|_{C^{0,1}(B_2)}, \|u\|_{C^{0,1}(B_2)}) (\Delta u)^2 \varphi^{-\frac{1}{3}}, $$
    where we have used  $b \leq C(n, f_0, \|u\|_{C^{0,1}(B_2)}) \Delta u$ in the last inequality. Combining these inequalities, we have
    \begin{eqnarray*}
        J &\geq& 6 \phi^{5} b \Delta_F \phi +\varphi \Delta f - C_1 + c_1(n) (\Delta u)^3 \varphi - C_2 (\Delta u)^2 \varphi^{ \frac{2}{3}} \\
        &\geq& 6 \phi^{5} b \Delta_F \phi +\varphi \Delta f - C(n, f_0,\|f\|_{C^{0,1}(B_2)}, \|u\|_{C^{0,1}(B_2)},\|\phi\|_{C^{0,1}(B_2)}) \Delta u.
    \end{eqnarray*}
    This completes the proof of inequality \eqref{eq:boundary-Jacobi-main} in the whole region.
\end{proof}

\subsection{Pogorelov-type \texorpdfstring{$W^{2,p}$}{} estimates}

\begin{prop}
\label{lemma:Hessian_W2p_estimate}
    Let $u$ be a smooth 2-convex solution of \eqref{eq:sigma_lambda} in $ B_2$ with $f\in C^{\infty}(B_2)$ and $\inf_{B_2} f\ge f_0 >0$.
    Suppose that $\Omega\subset B_1$ is a connected open subset in $B_1$ and   $w \in C^2(\mathbb{R}^n)$ is a 2-convex function satisfying
    $$ w>u  \text{ in } \Omega, \qquad  w=u \text{ on } \partial \Omega. $$
      Then for   $\varphi = (w-u)^6  $  and  every integer $p \geq 1$, we have
    $$ \int_{\Omega} (\Delta u)^p \varphi^{p-1} dx \le  p!\, [C(n,f_0,\|w\|_{C^{0,1}(B_2)}, \|u\|_{L^{\infty}(B_2)},\|f\|_{C^{0,1}(B_2)})]^p. $$
\end{prop}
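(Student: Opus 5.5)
Set $\phi = w-u$, so $\varphi=\phi^6$, and take $b=\Delta u + A + e^{|Du|^2}$ as in Proposition \ref{lemma:Boundary_Jacobi}. The plan is induction on $p$, with the Jacobi inequality driving an integration by parts against powers of $\varphi b$.

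\textbf{Step 0: preliminary bounds.} We need $\|u\|_{C^{0,1}(\Omega)}$ controlled by $\|u\|_{L^\infty(B_2)}$ and $\|w\|_{C^{0,1}}$. Subharmonicity of $u$ together with $u\le w$ on $\Omega$ and the boundary matching $u=w$ on $\partial\Omega$ should give $|Du|\le C$ where it matters. If necessary, one instead works with a standard gradient estimate for 2-convex solutions on $B_{3/2}$ in terms of $\|u\|_{L^\infty(B_2)}$. Hence $\phi$ is Lipschitz and $b\le C\Delta u$ by \eqref{eq:Delta-u-lower}.

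\textbf{Base case $p=1$.} We must bound $\int_\Omega \Delta u$. Integrate $\Delta_F\phi=\Delta_F w-\Delta_F u$ over $\Omega$. Since $\Delta_F u=\sum F_{ij}u_{ij}=2f$, and $\phi=0$ on $\partial\Omega$ with $\phi>0$ inside, we get
\[
\int_\Omega \Delta u \lesssim \int_\Omega \operatorname{tr}F \lesssim \int_\Omega\Big(\sum_{ij}F_{ij}w_{ij}\Big)\ldots
\]
This is awkward, so the cleaner route is to run the same integration-by-parts argument as in the inductive step at level $p=1$.

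\textbf{Inductive step.} Multiply \eqref{eq:boundary-Jacobi-main} by $(\varphi b)^{p-1}$ and integrate over $\Omega$. Writing $g=\varphi b$, use
\[
\int_\Omega g^{p-1}\Delta_F g=-(p-1)\int_\Omega g^{p-2}|\nabla_F g|^2 .
\]
No boundary terms appear, since $g$ vanishes to order $6$ on $\partial\Omega$ and $F_{ij}$ is divergence free. Because $p-1\ge \tfrac12$ for $p\ge 2$, the gradient term on the left is nonpositive overall:
\[
-(p-\tfrac32)\int_\Omega g^{p-2}|\nabla_F g|^2\le 0 .
\]
It follows that
\[
\int_\Omega 6\phi^5 b\,g^{p-1}\Delta_F\phi \le \int_\Omega g^{p-1}\big(C\Delta u-\varphi\Delta f\big).
\]
The $\Delta f$ term is integrated by parts once, since only $f\in C^{0,1}$ is controlled; this costs terms $\lesssim p\int g^{p-2}|\nabla g||Df|\varphi$, which are absorbed using the gradient term kept with a small positive coefficient.

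\textbf{Key sign (main obstacle).} Compute
\[
\Delta_F\phi=\sum_{ij}F_{ij}w_{ij}-2f .
\]
The first term is $\ge0$, and in fact $\ge c\,\sigma_1(D^2u)$-type, since $w$ is 2-convex with $\sigma_2(D^2w)\ge c>0$ (by Mooney-type strict 2-convexity, or a small perturbation) and by Gårding's inequality
\[
\sum_{ij}F_{ij}w_{ij}\ge 2\sqrt{\sigma_2(D^2u)\sigma_2(D^2w)} .
\]
This alone gives only a constant lower bound, not $\Delta u$; the gain must instead come from the $-\Delta_F u$ piece treated via integration by parts, which moves the Hessian weight onto $|D\phi|$. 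Concretely, I would bound
\[
\int_\Omega \phi^5 b\,g^{p-1}\,\Delta u\;\lesssim\;\int_\Omega \phi^5 b\,g^{p-1}\operatorname{tr}F .
\]
Then write $\operatorname{tr}F\,\phi^{...}$ using $\Delta_F\phi$ versus $\Delta_F(|x|^2)$-type comparisons, and integrate by parts so the Hessian weight lands on $|D\phi|$. This produces
\[
\int_\Omega(\Delta u)^{p+1}\varphi^{p}\le (p+1)\,C\int_\Omega(\Delta u)^p\varphi^{p-1},
\]
after using $\phi^{6(p-1)+5}\ge c\,\phi^{6p}$ (since $\phi$ is bounded) and Young's inequality on the $C\Delta u$ term. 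Iterating this recursion gives the factorial $p!\,C^p$.

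I expect the hardest part to be extracting the positive $(\Delta u)^{p+1}\varphi^p$ term with a constant linear in $p$.
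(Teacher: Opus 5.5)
\textbf{There is a genuine gap, exactly where you anticipated it.} Nothing in the proposal actually produces the top-order term $\int_\Omega(\Delta u)^{p+1}\varphi^p$.

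Your scheme needs $6\phi^5 b\,\Delta_F\phi$ to be a good term of size $\gtrsim\phi^5 b\,\Delta u$, i.e.\ $\Delta_F\phi\gtrsim\Delta u$. The hypotheses only give $\Delta_F w>0$, and the substitute you propose is not an argument. Since $\Delta_F u=2f$ is a bounded function, there is no gain to extract from ``the $-\Delta_F u$ piece''. The ``$\Delta_F\phi$ versus $\Delta_F(|x|^2)$'' integration by parts is never carried out, so the claimed recursion is asserted rather than derived. (G\r{a}rding's inequality applied to $D^2w-\epsilon I\in\Gamma_2$ would in fact give $\Delta_F w>\epsilon(n-1)\Delta u$. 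But then the constant depends on the 2-convexity margin $\epsilon$ of $w$, not only on $\|w\|_{C^{0,1}}$ as the statement requires.)

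Testing with $(\varphi b)^{p-1}$ also cannot start the recursion. The step from $p=1$ to $p=2$ uses the test function $1$, and the gradient term then enters as $+\frac12\int_\Omega(\varphi b)^{-1}|\nabla_F(\varphi b)|^2$, an uncontrolled positive term with the wrong sign.

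The base case, by contrast, is easy. Since $\Delta u>0$, for a cutoff $\xi$ we have $\int_\Omega\Delta u\le\int_{B_2}\xi\Delta u=-\int_{B_2}D\xi\cdot Du\le C$. This needs $|Du|\le C$, which comes from the Chou--Wang interior gradient estimate; subharmonicity and $u\le w$ do not give it.

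\textbf{The missing idea:} the positive term does not come from $\Delta_F\phi$ at all. The paper uses only $\Delta_F\phi>-2f$, so $6\phi^5 b\Delta_F\phi\ge -C\Delta u$ is absorbed into the error. It then tests the resulting inequality with $(\varphi b)^p$, where the gradient coefficient is $p-\frac12>0$ for every $p\ge1$.

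Both right-hand terms, $C\int_\Omega b^p\varphi^p\Delta u$ and $-\int_\Omega b^p\varphi^{p+1}\Delta f$, are integrated by parts once, writing $\Delta u=\operatorname{div}Du$ in the first. Using $|Du|\le C$, Young's inequality and $|\nabla_F h|^2\ge \frac{f_0}{\Delta u}|Dh|^2$, this yields
\[
\int_\Omega|\nabla_F(\varphi b)|^2(\varphi b)^{p-1}\le C\int_\Omega b^p\varphi^{p-1}.
\]
The same device then generates the top-order term, via $b\le C\Delta u$:
\[
\int_\Omega b^{p+1}\varphi^p\le C\int_\Omega b^p\varphi^p\Delta u=-Cp\int_\Omega(\varphi b)^{p-1}D(\varphi b)\cdot Du .
\]
The right side is bounded by the gradient estimate above plus $pC\int_\Omega b^p\varphi^{p-1}$. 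This gives the recursion with a constant linear in $p$, and hence the $p!\,C^p$ bound.

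In short, the Jacobi inequality is used only to control the gradient term; the extra power of $\Delta u$ comes from the divergence structure of $\Delta u$ and the gradient bound.
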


\begin{proof}
By the gradient estimate by Chou-Wang (Theorem 3.2 in \cite{MR1835381}), we know that $ \|u\|_{C^{0,1}(B_1)}$ can be bounded by a constant depending only on $n, f_0, \|u\|_{L^{\infty}(B_2)}, \|f\|_{C^{0,1}(B_2)}$.
Hence, in the proof below, we may use $\|u\|_{C^{0,1}(B_1)}$, and at the end replace this dependence by the above quantities. We denote constant $C=C(n,f_0,\|w\|_{C^{0,1}(B_2)}, \|u\|_{C^{0,1}(B_1)} ,\|f\|_{C^{0,1}(B_2)})$ which may change from line to line.
    Write $\varphi = \phi^6$, where $ \phi = w-u \in C^2(\Omega)$.
    Then $$\Delta_F \phi = \Delta_F w - \Delta_F u > -2f,$$
    where we use $\Delta_Fw >0 $ since $w$ is 2-convex, and $\Delta_F u = 2f$.  Recalling the function $b$ in the beginning of this subsection,
    applying Proposition \ref{lemma:Boundary_Jacobi}, and observing that the first term in the right hand side of \eqref{eq:boundary-Jacobi-main} can be estimated by
    $$  6 \phi^{5} b \Delta_F \phi \geq -C\Delta u, $$
    we thus have
    $$ \Delta_F(\varphi b) + \frac{1}{2}\frac{|\nabla_F (\varphi b)|^2}{\varphi b} \geq  \varphi\Delta f  - C \Delta u. $$
    For any integer $p\ge 1$,  integrating the above inequality multiplied by the test  function $(\varphi b)^p$, we have
    \begin{equation}
    \label{eq_temp_1}
    (p- \frac{1}{2}) \int_{\Omega}|\nabla_F(\varphi b)|^2[\varphi b]^{p-1} dx \le C\int_{\Omega} b^{p} \varphi^p \Delta u \,dx -\int_{\Omega}  b^p \varphi ^{p+1}\Delta f  \,dx.
    \end{equation}
   To deal with the terms on the right hand side, for any constant $0<\delta<1$ we  integrate by parts and  use Young's inequality to estimate that
    \begin{eqnarray}
         \int_{\Omega} b^p \varphi^p \Delta u \,dx &= &-\int_{\Omega} D[(\varphi b)^p] Du \,dx\notag  \\
         &\leq & \delta p\int_{\Omega} |D(\varphi b)|^2 b^{p-2}\varphi^{p-1} \,dx +\frac p{\delta} \int_{\Omega}|Du|^2 b^{p} \varphi^{p-1} \,dx, \label{eq:recursion_ahead}
    \end{eqnarray}
    and
    \begin{eqnarray}
       - \int_{\Omega}\Delta f \varphi^{p+1} b^{p} dx &=& \int_{\Omega}\varphi Df D(\varphi b)^{p} dx+\int_{\Omega} (\varphi b)^{p}Df D\varphi \,dx \notag \\
        & \leq & \delta p\int_{\Omega} |D(\varphi b)|^2 b^{p-2} \varphi^{p-1} dx
        +\frac{pC}{\delta} \int_{\Omega} b^{p}\varphi ^{p-1}dx .\label{eq:temp_Delta_f}
    \end{eqnarray}
    Since $(\Delta u-\lambda_i)\Delta u\ge f\ge f_0$ for each $1\le i\le n$,  we have
    \begin{equation}
        \label{eq:nabla_F_to_D}
        |\nabla_F(\varphi b)|^2(\varphi b)^{p-1}\ge |\nabla_F(\varphi b)|^2\Delta u \varphi^{p-1} b^{p-2} \geq f_0 |D(\varphi b)|^2 b^{p-2}\varphi^{p-1} .
    \end{equation}
    Substituting inequality \eqref{eq:nabla_F_to_D} into \eqref{eq:recursion_ahead} and \eqref{eq:temp_Delta_f}, and then inserting the resulting inequality into \eqref{eq_temp_1}, we obtain
    \begin{equation*}
        (p- \frac{1}{2}) \int_{\Omega}|\nabla_F(\varphi b)|^2[\varphi b]^{p-1} dx \leq C\delta p \int_{\Omega}|\nabla_F(\varphi b)|^2[\varphi b]^{p-1}\,dx + \frac{pC}{\delta} \int_{\Omega} b^p \varphi^{p-1}\,dx.
    \end{equation*}
    Choosing   a sufficiently small $\delta>0$ and recalling $p\ge 1$, we get
    \begin{equation*}
        \int_{\Omega}|\nabla_F(\varphi b)|^2[\varphi b]^{p-1} dx \leq C \int_{\Omega} b^p \varphi^{p-1} \,dx.
    \end{equation*}
    Since $Du$ is uniformly bounded and $\Delta u\ge \sqrt{2f_0}$ has a universal lower bound, we have $b \leq C \Delta u$.
    Combining this equality with \eqref{eq:recursion_ahead}, we finally obtain    a recursion formula
    \begin{eqnarray*}
        \int_{\Omega} b^{p+1} \varphi^p \,dx \leq C \int_{\Omega} b^p \varphi^p \Delta u \,dx & \leq& \delta p\int_{\Omega} |D[\varphi b]|^2 b^{p-2}\varphi^{p-1} dx +\frac p{\delta} \int_{\Omega}|Du|^2 b^{p} \varphi^{p-1} dx \\
        & \leq & C\delta p \int _{\Omega}|\nabla_F(\varphi b)|^2[\varphi b]^{p-1} \,dx + pC \int_{\Omega} b^p \varphi^{p-1} \,dx \\
        & \leq & p C \int_{\Omega} b^p \varphi^{p-1} \,dx.
    \end{eqnarray*}
    Therefore,
    $$
    \int_{\Omega} b^{p+1}\varphi ^pdx \le pC \int_{\Omega} b^{p}\varphi ^{p-1}dx \leq \cdots \leq p! C^p \int_{\Omega} b^2 \varphi \,dx \leq p! C^p \int_{\Omega} b \,dx \leq p! C^p \int_{\Omega} \Delta u \,dx.
    $$
   Now we choose a cutoff function $\xi \in C_0^{\infty}(B_2),\xi =1 $ in $B_1$, $|D\xi|<10$, then
    \begin{eqnarray*}
        \int_\Omega \Delta u \,dx \leq \int_{B_2} \xi \operatorname{div}(Du) \, dx \leq |\int_{B_2} {D\xi \cdot Du} \,dx |\leq C.
    \end{eqnarray*}
\end{proof}

\subsection{Pogorelov-type \texorpdfstring{$C^{1,1}$}{} estimates}

\begin{prop}
    \label{lemma:Hessian_C11ptype_estimate}
    Let $u$ be a smooth 2-convex solution of \eqref{eq:sigma_lambda} in $ B_2$ with $f\in C^{\infty}(B_2)$ and $\inf_{B_2} f\ge f_0 >0$.
    If $\Omega \subset B_1$ is a connected open subset and $\varphi\in C^2(B_2)$, $\varphi \geq 0$ satisfies $\varphi=0$ on $B_2 \setminus \Omega$,
    then
    $$
        \|\varphi^{2n}\Delta u\|_{L^{\infty} (\Omega)}\le C \int_{\Omega}\varphi^{n} (\Delta u)^{n+1}dx,
    $$
    where $C = C(n,f_0,\|\varphi\|_{C^{0,1}(\Omega)},\|f\|_{C^{0,1}(B_2)})$.
\end{prop}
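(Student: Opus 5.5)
The plan is to derive a subsolution inequality for $b$ (equivalently $\Delta u$) and run a mean value / De Giorgi--Moser type argument for the divergence form operator $\Delta_F=\partial_j(F_{ij}\partial_i\cdot)$. The structure $F_{ij}=\Delta u\,\delta_{ij}-u_{ij}$ is divergence free, i.e.\ $\sum_j\partial_jF_{ij}=0$. Lemma \ref{lemma:Jacobi_only_Delta_F} together with \eqref{eq:Delta_F-exp} gives $\Delta_F b\ge \Delta f - C$, which is a weak subsolution inequality once $\Delta f$ is moved into divergence form by integrating by parts. Since $f$ is only Lipschitz in the constants, every occurrence of $\Delta f$ will be handled that way. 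The ellipticity is degenerate, with $f_0/\Delta u\le F_{ii}\le C\Delta u$ by \eqref{eq:f_bounds}. I therefore intend to imitate the Monge--Amp\`ere/Pogorelov integral approach used in our previous work \cite{chenjianzhou2024PSCE}: a Michael--Simon/Sobolev inequality on the graph of $Du$, or more concretely a weighted Sobolev inequality for $\Delta_F$, whose weight is absorbed by powers of $\Delta u$. This explains the appearance of $(\Delta u)^{n+1}$ and $\varphi^n$ in the statement.

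\textbf{Step 1.} From $\Delta_F b\ge \Delta f-C$, test against $\varphi^{2q}b^{q}$ for $q\ge 1$, integrate by parts (no boundary terms, since $\varphi=0$ off $\Omega$), and use Cauchy--Schwarz in the $F$-metric. This yields a Caccioppoli-type inequality of the form
\[
\int |\nabla_F(\varphi^{q} b^{q/2+1/2})|^2\le Cq^2\int \varphi^{2q-2}b^{q+1}\bigl(|\nabla_F\varphi|^2+\varphi^2\bigr)+\text{($\Delta f$ terms)}.
\]
Here $|\nabla_F\varphi|^2\le C\Delta u\,|D\varphi|^2\le Cb$, and the $\Delta f$ term is moved onto $Df$ times derivatives of the test function, then absorbed exactly as in \eqref{eq:temp_Delta_f} using $|\nabla_F g|^2\ge f_0|Dg|^2/\Delta u$.

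\textbf{Step 2.} I need a Sobolev inequality adapted to $\Delta_F$. The lower bound $F\ge (f_0/\Delta u) I$ converts $\int|\nabla_F g|^2$ into $\int |Dg|^2/\Delta u$, and Hölder with a power of $\Delta u$ then gives an Euclidean $W^{1,s}$ bound, $s<2$. Then Euclidean Sobolev raises the integrability of $\varphi^{q}b^{(q+1)/2}$ by a fixed factor $\chi>1$, at the cost of extra factors of $b$ in the right-hand side. Alternatively, I would use the $\sigma_2$ analogue of the graph Sobolev inequality: for $F_{ij}$, $\det(I+\dots)$-type weights. I would try the Euclidean route first.

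\textbf{Step 3.} Iterate with $q_k\sim\chi^k$ and track the powers of $\varphi$ and $b$; the exponents $2n$ on $\varphi$ and $n+1$ on $\Delta u$ should emerge as the limits of the iteration. Then let $k\to\infty$ to get $\sup\varphi^{2n}b\le C\int\varphi^n b^{n+1}$, and finally use $\Delta u\le b\le C\Delta u$.

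The main obstacle is Step 2. The degeneracy of $F_{11}$, which can be as small as $f_0/\Delta u$, means each Sobolev step loses a power of $\Delta u$. The bookkeeping must close with precisely the exponent $n+1$; this requires that the loss per step be compensated by the $\varphi$ powers, giving geometric growth $\chi^k$ that still sums. If the naive Hölder loss is too large, I would instead exploit that $\Delta_F$ is the divergence of a null-Lagrangian structure: apply the Michael--Simon inequality on the Lagrangian graph $\{(x,Du)\}$, whose induced volume is controlled by $\Delta u$-type symmetric functions.
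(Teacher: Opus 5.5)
\paragraph{A gap at the Sobolev step.} Your framework matches the paper's: a Caccioppoli inequality from Lemma \ref{lemma:Jacobi_only_Delta_F} using that $F_{ij}$ is divergence free, $\Delta f$ moved onto $Df$ by parts, the lower bound $F\ge (f_0/\Delta u)I$, a Euclidean Sobolev inequality, and a Moser iteration. But the step that actually produces the statement is left open, and the device you propose for it does not close. You convert $\int|\nabla_F g|^2$ into $\int |Dg|^2/\Delta u$ and then use H\"older to reach a $W^{1,s}$ bound with $s<2$. As you set it up, this needs an unweighted bound on $\int_{\{\varphi>0\}}(\Delta u)^{s/(2-s)}$. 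Nothing available, in particular not $\int\varphi^n(\Delta u)^{n+1}$, bounds $\Delta u$ near $\partial\Omega$ (where $\varphi\to0$) in terms of the allowed quantities.

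No H\"older, Michael--Simon or graph Sobolev inequality is needed. Instead, put the $1/\Delta u$ loss into the function to which the ordinary $W^{1,2}$ Sobolev inequality is applied. Test $\Delta_F\Delta u\ge \Delta f-\frac32|Df|^2/f$ against $(\Delta u)^p\varphi^q$, and use Cauchy--Schwarz in the $F$-metric, $|\nabla_F\varphi|^2\le 2\Delta u\,|D\varphi|^2$ and $|\nabla_F\Delta u|^2\ge f_0|D\Delta u|^2/\Delta u$. This gives
\[
\int |D\Delta u|^2(\Delta u)^{p-2}\varphi^q\,dx\le C\int(\Delta u)^{p+2}\varphi^{q-2}\,dx ,
\]
so the Euclidean gradient of $(\Delta u)^{p/2}\varphi^{q/2}$ is controlled. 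With $\gamma=\frac{n}{n-2}$ and $q\le 2np$, Sobolev then yields
\[
\int(\Delta u)^{\gamma p}\varphi^{\gamma q}\,dx\le\Bigl[Cp^4\int(\Delta u)^{p+2}\varphi^{q-2}\,dx\Bigr]^{\gamma}.
\]

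\paragraph{The exponent bookkeeping.} Each step therefore costs exactly two powers of $\Delta u$ and two powers of $\varphi$, and this forces two independent exponents. Your single-parameter test function $\varphi^{2q}b^{q}$ cannot be iterated, even after the repair above. A step maps $\int\varphi^{2q-2}b^{q+2}$ to $\int\varphi^{2q\gamma}b^{q\gamma}$, and you may only lower powers of $\varphi$ (since $\varphi\le C$) and raise powers of $b$ (since $b\ge c>0$). So a later left side $\int\varphi^{2s\gamma}b^{s\gamma}$ dominates $\int\varphi^{2q-2}b^{q+2}$ only if $q+2\le s\gamma\le q-1$, which is impossible.

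The paper's recursion is $p_k=p_{k-1}/\gamma+2$, $q_k=q_{k-1}/\gamma-2$, with fixed points $n$ and $-n$. It starts from $p_0=\gamma^{k_0}$ and $q_0=2np_0$; this is where $\varphi^{2n}$ comes from, since the left side is $\|\varphi^{2n}\Delta u\|_{L^{p_0}}^{p_0}$. It ends with $p_{k_0}\le n+1$ and $q_{k_0}=n(1+\gamma^{-k_0})\ge n$. After taking $p_0$-th roots the accumulated constants stay bounded independently of $k_0$, and letting $k_0\to\infty$ gives the claim; for $n=2$ any $\gamma>1$ works.

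\paragraph{Constant dependence.} Work with $\Delta u$ itself rather than $b=\Delta u+A+e^{|Du|^2}$. The $e^{|Du|^2}$ term and the comparison $b\le C\Delta u$ put $\|Du\|_{L^\infty}$ into the constant, which the statement does not allow.
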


\begin{proof}
    In this proof, we denote constant $C=C(n,f_0,\|\varphi\|_{C^{0,1}(B_2)}, \|f\|_{C^{0,1}(B_2)})$ which may change from line to line.

    Let $0<\rho<1$, and $\xi\in C^\infty_0(B_{1+\rho})$ be a cutoff function with $\xi=1$ in $B_{1}$, $\xi\ge 0$, $|D\xi|\le 2\rho^{-1} $. Let $p\ge 1,q\ge 2$ be a pair of positive constants satisfying $1\le\frac qp\le 2n$. According to Lemma \ref{lemma:Jacobi_only_Delta_F},  we have
    $$
    \int_{B_2}(\Delta u)^p\varphi^{q}\xi^2\Delta_F\Delta u \, dx \ge\int_{B_2}(\Delta u)^p \varphi^{q}\xi^2 [{\Delta f}-C |Df|^2] dx.
    $$
    Using integration by parts,    $\Delta u-\lambda_i\le 2\Delta u$ and (4), we get
    \begin{align*}
    \int_{B_{1+\rho}}|D\Delta u|^2(\Delta u)^{p-2}\varphi^q\xi^2dx
    & \le C \int_{B_{1+\rho}}(|D \xi|^2+|Df|^2+|D\varphi|^2)(\Delta u)^{p+2}\varphi^{q-2}dx \\
    & \le \frac{C }{\rho^2} \int_{B_{1+\rho}} (\Delta u)^{p+2}\varphi^{q-2} dx.
    \end{align*}
    Then we have
    \begin{align*}
    \int_{B_1}|D[(\Delta u)^{\frac p2}\varphi^{\frac q2}]|^2dx
    \le \frac{C_2p^2q^2}{\rho^2}\int_{B_{1+\rho}}(\Delta u)^{p+2}\varphi^{q-2}dx,
    \end{align*}
    and
    \begin{align*}
    \int_{B_{1}}|(\Delta u)^{\frac p2}\varphi^{\frac q2}|^2 dx
    \le C_1\int_{B_{1+\rho}}(\Delta u)^{p+2}\varphi^{q-2}dx \le \frac{C_2p^2q^2}{\rho^2}\int_{B_{1+\rho}}(\Delta u)^{p+2}\varphi^{q-2}dx .
    \end{align*}
    These two estimates give $(\Delta u)^{\frac p2}\varphi^{\frac q2} \in W^{1,2}(B_{1})$.

    When $n>2$, we have $ W^{1,2}(B_1) \hookrightarrow L^{\frac{2n}{n-2}}(B_1) $ by Sobolev embedding theorem, which, together with the required inequality
    $1\le\frac qp\le 2n$, yields
    \begin{align*}
    \int_{B_{1}}(\Delta u)^{\frac {np}{n-2}}\varphi^{\frac {nq}{n-2}} \, dx
    &\le [\frac{C p^2q^2}{\rho^2}\int_{B_{1+\rho}}(\Delta u)^{p+2}\varphi^{q-2}dx]^{\frac {n}{n-2}} \\
    & \le [\frac{C  p^4}{\rho^2}\int_{B_{1+\rho}}(\Delta u)^{p+2}\varphi^{q-2}dx]^{\frac {n}{n-2}}.
    \end{align*}
     Denote $\gamma = \frac{n}{n-2}$, fix an integer $k_0\ge\frac{\ln n}{\ln\gamma}$ and take $p_0 = \gamma^{k_0}$, $q_0 = 2np_0$ to initiate the iteration. For $k = 1,\cdots,k_0$, set
    \begin{eqnarray*}
        p_{k} &=& \gamma^{-1} p_{k-1}+2=\gamma^{-k}p_0+2\sum^{k-1}_{i=0}\gamma^{-i}, \\
        q_{k} &=& \gamma^{-1} q_{k-1}-2=\gamma^{-k}q_0-2\sum^{k-1}_{i=0}\gamma^{-i}, \\
        r_k &=& 1 + \sum^{k}_{i=1}2^{-(k_0-i+2)}.
    \end{eqnarray*}
    It is easy to verify   that $p_k > n$ and $q_k \ge 2, $  satisfying $\frac{q_k}{p_k} \le 2n$ for all $k$.
    Then the previous inequality means that
    $$
    \int_{B_{r_{k-1}}}(\Delta u)^{p_{k-1}}\varphi^{q_{k-1}}dx
    \le [C \, 2^{2(k_0-k+2)} (p_k-2)^4 \int_{B_{r_{k}}}(\Delta u)^{p_k}\varphi^{q_{k}}\,dx]^{\gamma}.
    $$
    Therefore, an iteration process implies
    \begin{align*}
    \int_{B_{1}}(\Delta u)^{p_0}\varphi^{q_{0}}dx
    &\le C^{\sum^{k_0}_{k=1}\gamma^k}2^{2\sum^{k_0}_{k=1}(k_0-k+2)\gamma^k}\prod ^{k_0}_{k=1}(p_k-2)^{4\gamma^k}[\int_{B_{r_{k_0}}}(\Delta u)^{p_{k_0}}\varphi^{q_{k_0}}dx]^{\gamma^{k_0}}\\
    &\le C^{\sum^{k_0}_{k=1}\gamma^k} 4^{\sum^{k_0}_{k=1}(k_0-k+2)\gamma^k} \prod^{k_0}_{k=1}(n\gamma^{-k}p_0)^{4\gamma^k} [\int_{B_{r_{k_0}}}(\Delta u)^{p_{k_0}}\varphi^{q_{k_0}}dx]^{\gamma^{k_0}} \\
    & = (n^4 C )^{\sum^{k_0}_{k=1}\gamma^k} 4^{\sum^{k_0}_{k=1}(k_0-k+2)\gamma^k} \gamma^{4\sum^{k_0}_{k=1}(k_0-k)\gamma^k} [\int_{B_{r_{k_0}}}(\Delta u)^{p_{k_0}}\varphi^{q_{k_0}}dx]^{p_0}.
    \end{align*}

    Notice that both $\frac{1}{\gamma^{k_0}} \sum^{k_0}_{k=1}\gamma^k $ and $ \frac{1}{\gamma^{k_0}} \sum^{k_0}_{k=1}(k_0-k)\gamma^k $ can   be bounded by some constant $C(n)$. So we get
    \begin{align*}
    \|\varphi^{2n}\Delta u\|_{L^{p_0} (B_{1})} = [\int_{B_{1}}(\Delta u)^{p_0}\varphi^{q_{0}}dx]^\frac 1{p_0} \leq C ^{C(n)} \int_{B_{2}}(\Delta u)^{p_{k_0}}\varphi^{q_{k_0}}dx.
    \end{align*}
    From the choice of $p_0, q_0 $ and $k_0$, we can verify that $p_{k_0} \le n+1 $ and $q_{k_0} \ge n$. So the above estimate implies
    $$ \|\varphi^{2n}\Delta u\|_{L^{p_0} (B_{1})} \leq C \int_{B_{2}}(\Delta u)^{n+1}\varphi^{n}dx. $$
    Let $k_0\rightarrow +\infty$, that is $p_0 \to +\infty$, we obtain the desired   conclusion.

    When $n=2$, take $\gamma>1$ to be any fixed number, for example $\gamma=2$.  Then $W^{1,2}\hookrightarrow L^{\gamma}$. Following the above iteration process, one can still obtain
    $$ \| \varphi^{2n}\Delta u\|_{L^{\infty}(B_{1}) } \leq C \int_{B_{2}}(\Delta u)^{n+1}\varphi^{n}dx .$$
    This completes the proof of the proposition.
\end{proof}

The next Proposition \ref{prop: Hessian} follows immediately from Proposition    \ref{lemma:Hessian_C11ptype_estimate}  and   Proposition \ref{lemma:Hessian_W2p_estimate} with   $p=n+1$.

\begin{prop}
    \label{prop: Hessian}
     Let $u$ be a smooth 2-convex solution of \eqref{eq:sigma_lambda} in $ B_2$ with $f\in C^{\infty}(B_2)$ and $\inf_{B_2} f\ge f_0 >0$.
     Let $w \in C^2(\mathbb{R}^n)$ be a 2-convex function, and let $\Omega$ be a connected component of $\{ w >u \}$ such that $\overline\Omega \subset B_1$.
     Define $$ \varphi(x) := (w(x)-u(x))^6, \qquad x\in \Omega. $$
     Then $\varphi\in C^2(\Omega)$ satisfying $\varphi=0$ on $\partial\Omega$, and
     $$ \| \varphi^{2n} \Delta u\|_{L^\infty(\Omega)} \leq C(n,f_0,\|w\|_{C^{0,1}(B_2)},\|u\|_{C^{0,1}(B_2)},\|f\|_{C^{0,1}(B_2)}). $$
\end{prop}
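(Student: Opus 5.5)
The plan is to combine Proposition \ref{lemma:Hessian_C11ptype_estimate} with Proposition \ref{lemma:Hessian_W2p_estimate} at $p=n+1$, after checking that the function $\varphi=(w-u)^6$ meets the hypotheses of both.

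\textbf{Step 1: regularity and boundary behaviour of $\varphi$.} Since $w\in C^2(\mathbb{R}^n)$ and $u$ is smooth, $w-u\in C^2$, and $\varphi=(w-u)^6$ is $C^2$ on $\Omega$. On $\partial\Omega$ we have $w=u$, because $\Omega$ is a connected component of the open set $\{w>u\}$, so $\varphi=0$ there.

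To apply Proposition \ref{lemma:Hessian_C11ptype_estimate}, we need a function in $C^2(B_2)$ that vanishes off $\Omega$. I take the extension
\[
\tilde\varphi := \big((w-u)^+\,\chi_{\Omega}\big)^6 ,
\]
which is $\varphi$ in $\Omega$ and $0$ elsewhere. The sixth power of the positive part is $C^2$ across $\{w=u\}$, since all its derivatives up to order $5$ vanish there. Also, $\Omega$ is open and closed inside $\{w>u\}$, and $\overline\Omega\subset B_1$, so $\tilde\varphi$ is $C^2$ on $B_2$ and is supported in $\overline\Omega$. Its Lipschitz norm is bounded by $C(\|w\|_{C^{0,1}(B_2)},\|u\|_{C^{0,1}(B_2)})$, because $|w-u|$ is bounded on $\overline\Omega\subset B_1$ by these norms.

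\textbf{Step 2: the sup bound.} Proposition \ref{lemma:Hessian_C11ptype_estimate} applied to $\tilde\varphi$ gives
\[
\|\varphi^{2n}\Delta u\|_{L^\infty(\Omega)}\le C\int_\Omega \varphi^{n}(\Delta u)^{n+1}\,dx,
\]
with $C$ depending only on $n$, $f_0$, $\|f\|_{C^{0,1}}$ and $\|\tilde\varphi\|_{C^{0,1}}$.

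\textbf{Step 3: the integral bound.} Proposition \ref{lemma:Hessian_W2p_estimate} with $p=n+1$ applies directly: $w$ is $2$-convex, $w>u$ in $\Omega$, and $w=u$ on $\partial\Omega$. It bounds the right-hand side of Step 2 by $(n+1)!\,C^{n+1}$. Combining the two bounds gives the claim. The dependence on $\|u\|_{L^\infty}$ is absorbed into $\|u\|_{C^{0,1}(B_2)}$, as in the statement.

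The only delicate point is Step 1, namely the $C^2$ extension of $\varphi$ by zero together with its Lipschitz bound. This is what the sixth power provides.
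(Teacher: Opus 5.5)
Your proposal is correct and matches the paper's argument: the paper derives this proposition directly from Proposition \ref{lemma:Hessian_C11ptype_estimate} combined with Proposition \ref{lemma:Hessian_W2p_estimate} at $p=n+1$, and the integral $\int_\Omega(\Delta u)^{n+1}\varphi^{n}\,dx$ bounded by the latter is exactly the right-hand side of the former. Your zero extension of $\varphi$ fills in the detail the paper leaves implicit, with one correction: since $w$ is only $C^2$, derivatives of order $3$ to $5$ need not exist, so justify the extension by noting that $(w-u)^6$ and its first and second derivatives vanish on $\{w=u\}$, which is all that is needed.
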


\section{Interior \texorpdfstring{$C^{2}$}{} regularity}
\label{section:C2-regularity}
In this section we prove Theorem \ref{Thm_sigma}. Assume that $u$ is a convex viscosity solution to
\begin{equation*}
    \sigma_{2}(D^2 u) = \sum_{1\leq i < j \leq n} \lambda_i \lambda_j = f(x), \quad x\in B_2,
\end{equation*}
with $f\in C^{0,1}(B_2)$, and $\inf_{B_2} f \geq f_0>0$.
The proof consists of two main parts:
\begin{enumerate}
    \item  Construct a sequence  of smooth approximating solutions $\{v_k\}$  such that $v_k \to u$  uniformly;
    \item  Prove the uniform $C^{1,1}$ estimates for $\{v_k\}$.
\end{enumerate}

\subsection{Approximating solutions and convergence}
    We begin by selecting smooth approximations of the data.
    Let $\{f_k\} \subset C^\infty(B_2)$ and $\{u_k\} \subset C^\infty(\overline{B_1})$ satisfy
    \begin{equation}
        \|f_k - f\|_{L^\infty(B_1)} \leq \frac{f_0}{2k}, \qquad \|f_k\|_{C^{0,1}(B_1)} \leq C \|f\|_{C^{0,1}(B_2)}, \qquad
        \|u_k - u\|_{C^0(\overline{B_1})} \leq \frac{1}{k},
        \label{eq:approx_norms}
    \end{equation}
    for each $k\geq 1$.
    For every $k$, we consider the Dirichlet problem
    \begin{equation}
    \label{eq:Dirichlet-approx}
        \begin{cases}
        \sigma_2(D^2 v_k) = f_k & \text{in}\ B_1, \\
        v_k = u_k & \text{on}\ \partial B_1.
        \end{cases}
    \end{equation}
    By the classical result of Caffarelli-Nirenberg-Spruck \cite{MR0806416}, problem \eqref{eq:Dirichlet-approx} admits a unique smooth 2-convex solution $v_k$.

    To obtain $C^0$ convergence of $v_k$ to $u$, we establish the following stability estimate. The argument is standard, see for instance Ishii-Lions \cite{Ishii_LionsViscositysolu}, Trudinger \cite{trudinger1990dirichlet} and Lu-Tsai \cite{LusiyuanPogorelovQuotient}.

\begin{prop}
\label{prop:stability}
    Let $u$ be a convex viscosity solution of $\sigma_2(D^2u) = f$ in $B_2$ and let $v\in C^{\infty}(\overline{B_1})$ be a smooth 2-convex solution of $\sigma_2(D^2 v) = g$ in $B_1$. Assume $f\in C^0(B_1)$, $f\geq f_0>0$ and $ \|f-g\|_{L^\infty(B_1)} \leq \frac{1}{2}f_0$. Then
    \begin{equation}
    \label{eq:stability}
        \|u-v\|_{C^0(\overline{B_1})} \leq \sup_{\partial B_1}|u-v|+\frac{1}{\sqrt{f_0}}\|f-g\|_{L^\infty(B_1)}.
    \end{equation}
\end{prop}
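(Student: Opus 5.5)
The plan is to compare $u$ and $v$ through a comparison principle for viscosity sub/supersolutions, after perturbing $v$ by a quadratic so that the comparison becomes strict. Set $M:=\sup_{\partial B_1}|u-v|$ and $\varepsilon:=\|f-g\|_{L^\infty(B_1)}$, and put $a:=\varepsilon/\sqrt{f_0}$. Consider the barriers
\[
v^{\pm}(x)=v(x)\pm M \pm \frac{a}{2}\,\frac{(|x|^2-1)}{n}\cdot c_n,
\]
with a dimensional normalization $c_n$ chosen below. The constant term handles the boundary, and the quadratic term (which vanishes on $\partial B_1$ and is at most $a/2$ in absolute value inside) shifts $\sigma_2$ by enough to absorb $\varepsilon$.

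The key computation uses 2-convexity of $v$. For a symmetric $D^2v$ in the positive cone $\Gamma_2$ and $t\ge0$,
\[
\sigma_2(D^2v+tI)=\sigma_2(D^2v)+t(n-1)\Delta v+\binom n2 t^2 .
\]
By \eqref{eq:Delta-u-lower} applied to $v$ we have $\Delta v>\sqrt{2\inf g}\ge\sqrt{f_0}$, since $g\ge f_0/2$. Therefore adding $tI$ with $t(n-1)\sqrt{f_0}>\varepsilon$ makes $v+\frac t2|x|^2$ a strict classical subsolution of $\sigma_2\ge g+\varepsilon\ge f$ in $\Gamma_2$. In the other direction, $\sigma_2(D^2v-tI)=g-t(n-1)\Delta v+\binom n2t^2$. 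Here I will need $D^2v-tI$ to stay in $\Gamma_2$ and to have $\sigma_2<f$. To get this I use concavity of $\sigma_2^{1/2}$ on $\Gamma_2$, or more simply the monotone path $s\mapsto\sigma_2(D^2v-sI)$, which is decreasing while in $\Gamma_2$ and drops below $g-\varepsilon$ before leaving the cone, since it reaches $0$ at the boundary. Choosing $t$ of order $\varepsilon/\sqrt{f_0}$ with care about constants, I expect to obtain exactly the factor $1/\sqrt{f_0}$, perhaps after adjusting $c_n$. Since $|x|^2-1\in[-1,0]$ on $B_1$, the perturbation costs at most $t/2\le \varepsilon/\sqrt{f_0}$ in sup norm.

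Next comes the comparison itself. Take $w^-:=v-M+\frac t2(|x|^2-1)$, which is a smooth function with $\sigma_2(D^2w^-)>f$, $\Delta w^->0$, and $w^-\le u$ on $\partial B_1$. Suppose $w^--u$ has a positive interior maximum at $x_0$. Then $w^-+{\rm const}$ touches $u$ from below at $x_0$. Because $u$ is a viscosity supersolution of $\sigma_2=f$ (for convex $u$, test functions touching from below can be taken with Hessian in $\overline{\Gamma_2}$, so the definition applies), we get $\sigma_2(D^2w^-(x_0))\le f(x_0)$, which is a contradiction. Hence $u\ge v-M-t/2$. Symmetrically, $w^+:=v+M-\frac t2(|x|^2-1)$ satisfies $\sigma_2(D^2w^+)<f$ in $\Gamma_2$ and touches $u$ from above at an interior minimum of $w^+-u$. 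The subsolution property of $u$ then forces $\sigma_2(D^2w^+(x_0))\ge f(x_0)$, or that $D^2w^+(x_0)$ lie in the admissible set; since $D^2w^+\in\Gamma_2$ here, this is again a contradiction. Combining the two directions gives \eqref{eq:stability}.

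The main obstacle is the supersolution-side barrier $v-\frac t2|x|^2$. I must make sure it remains 2-convex, so that the viscosity inequality is the meaningful one, and that the decrease of $\sigma_2$ is quantified by $\Delta v\ge\sqrt{f_0}$ so as to produce the stated constant $1/\sqrt{f_0}$. If staying inside $\Gamma_2$ fails for large $\varepsilon$, I would use $\varepsilon\le f_0/2$ together with concavity of $\sigma_2^{1/2}$ along $D^2v-sI$ to control the step size.
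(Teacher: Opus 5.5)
There is a genuine gap on the subsolution side, exactly at the step you flagged as the main obstacle. Your contradiction via $w^+=v+M-\frac t2(|x|^2-1)$ needs two things: $D^2w^+=D^2v-tI\in\Gamma_2$ (the paper's definition only tests with Hessians in $\Gamma_2$), and $\sigma_2(D^2v-tI)<f$. You try to secure both on all of $B_1$ with a single constant $t\sim\varepsilon/\sqrt{f_0}$, and this cannot work.

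The reason is that $D^2v$ has no a priori bound. At a point where its eigenvalues are $(L,g/L,0,\dots,0)$ with $L$ large, one has
$\sigma_2(D^2v-tI)=g-t(n-1)(L+g/L)+\binom n2t^2<0$
already for $t$ of order $g/L$. At a point where $\Delta v$ is of order $\sqrt{f_0}$, however, you need $t$ of order $\varepsilon/((n-1)\sqrt{f_0})$ to push $\sigma_2$ below $g-\varepsilon$. The monotone path $s\mapsto\sigma_2(D^2v(x)-sI)$, or concavity of $\sigma_2^{1/2}$, only gives an admissible window of $s$ that depends on $x$, and these windows need not intersect. The negative $\binom n2t^2$ term is also never actually controlled in your sketch.

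The missing idea is that cone membership is only needed at the contact point $x_0$, and there it comes for free from the subsolution property together with convexity of $u$. This is where convexity is really used; it is not needed on the supersolution side, since $D^2w^-=D^2v+tI\in\Gamma_2$ automatically.

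Here is how it goes. Let $m=\min(w^+-u)<0$. Since $w^+-m$ touches the convex $u$ from above at $x_0$, we get $N:=D^2w^+(x_0)\ge 0$. If $\operatorname{rank}N\le 1$, the test function $w^+-m+\eta|x-x_0|^2$ has Hessian in $\Gamma_n$ and $\sigma_2\to 0<f(x_0)$ as $\eta\to0$, which contradicts the subsolution property. Hence $N\in\Gamma_2$, $\sigma_2(N)\ge f(x_0)$, and $\operatorname{tr}N>\sqrt{2f_0}$. Now expand around $N$ rather than around $D^2v$:
$g(x_0)=\sigma_2(N+tI)=\sigma_2(N)+t(n-1)\operatorname{tr}N+\binom n2t^2>f(x_0)+t(n-1)\sqrt{2f_0}$.
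For example, $t=2\varepsilon/\sqrt{f_0}$ contradicts $g\le f+\varepsilon$, at a sup-norm cost of $t/2=\varepsilon/\sqrt{f_0}$.

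This is exactly the paper's argument. The paper phrases it as ``$u+\epsilon|x|^2$ is a strict viscosity subsolution of $\sigma_2=f+2\sqrt{2f_0}\epsilon(n-1)$, and $v$ touches it from above.'' Putting the quadratic on $u$ means the trace lower bound is applied to the test Hessian of $u$, which the paper takes (implicitly) from the subsolution property. It is not applied to $D^2v-tI$, where it is not available. Your $w^-$ half coincides with the paper's $\widetilde w$ argument and is fine.
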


\begin{proof}
    Let $\epsilon:= f_0^{-\frac{1}{2}} \|f-g\|_{L^\infty(B_1)}$.
    If $\|f-g\|_{L^\infty(B_1)} = 0$, we simply take an arbitrary $\epsilon>0$, and let $\epsilon \to 0$ at the end.

    Fix \(x_0 \in B_1\) and suppose \(\varphi \in C^2(B_1)\) touches $ u + \epsilon|x|^2$ from above at $x_0$, i.e.
    $$
    \varphi(x_0) = u(x_0)+\epsilon|x_0|^2, \qquad  \varphi \geq u+\epsilon|x|^2 \  \text{ in }  B_1.
    $$
    Then $\varphi-\epsilon|x|^2$ touches $u$ from above at $x_0$.
    Since \(u\) is a viscosity subsolution of \(\sigma_2(D^2u) = f\), by the definition in A.1 of the Appendix we have
    $$
    \sigma_2\left(D^2 \varphi(x_0) -2\epsilon I_n\right) \geq f(x_0).
    $$
    Using the identity
    $$ \sigma_2(M + 2\epsilon I) = \sigma_2(M) + 2\epsilon(n-1)\operatorname{tr}(M) + 2n(n-1) \epsilon^2, $$
    and the fact that $\Delta(\varphi-\epsilon|x|^2)(x_0)>\sqrt {2f(x_0)}>0$,
    we obtain
    \begin{equation*}
        \sigma_2(D^2\varphi(x_0)) = \sigma_2(D^2(\varphi-\epsilon|x|^2)(x_0) + 2\epsilon I) > f(x_0) + 2\sqrt{2f_0}\epsilon(n-1).
    \end{equation*}
    So $u+\epsilon|x|^2$ is a viscosity subsolution to $\sigma_2 (D^2u)= f + 2\sqrt{2f_0}\epsilon(n-1)$.
    Define
    \begin{equation*}
     w(x) := u(x)  + \epsilon(|x|^2-1)-\sup_{\partial B_1}|u-v|.
    \end{equation*}
    Then $w$ is also a viscosity subsolution to $\sigma_2 = f + 2\sqrt{2f_0}\epsilon(n-1)$.
    Moreover, $w \leq v$ on $\partial B_1$.

   Next, we   claim that $w \leq v$ in $B_1$.
  Assume, to the contrary, that $\max_{B_1}(w-v) = c_0 >0$ is attained at some interior point $y_0$:
    \begin{eqnarray}
    \label{eq:w-c_0-touch-v}
        (w-c_0)(y_0)=v(y_0), \qquad   w-c_0 \leq v \ \text{ in } B_1 .
    \end{eqnarray}
    This implies that $v$ touches $w-c_0$ from above at $y_0$. Since $w$ is a subsolution, we get $$\sigma_2(D^2 v)(y_0) \geq f(y_0) + 2\sqrt{2f_0}\epsilon(n-1).$$
    But $\sigma_2(D^2v) = g \leq f + \|f-g\|_{L^\infty(B_1)}$, which gives a contradiction, because
    $$ 2\sqrt{2f_0}\epsilon(n-1) = 2\sqrt{2}(n-1) \| f-g \|_{L^\infty(B_1)} > \| f-g \|_{L^\infty(B_1)} $$
    for $n\geq 2$.
    Therefore we must have $w \le v$ in $B_1$, i.e.
    \begin{equation}
    \label{eq:upper_u-v}
        u-v \le \epsilon+\sup_{\partial B_1}|u-v|.
    \end{equation}
    Similarly, define
    \begin{eqnarray*}
         \widetilde{w} = v + \epsilon(|x|^2-1)-\sup_{\partial B_1}|u-v|.
    \end{eqnarray*}
    We claim that $\widetilde{w} \leq u$ in $B_1$.
    Assume the contrary that $\widetilde{w} - u$ attains a positive maximum at $\widetilde{y}_0\in B_1$ with
    \begin{eqnarray*}
    (\widetilde{w}-\widetilde{c}_0)(\widetilde{y}_0) = u(\widetilde{y}_0), \qquad \widetilde{w}-\widetilde{c}_0 \leq u \ \text{ in } B_1.
    \end{eqnarray*}
    This implies that $\widetilde{w}-\widetilde{c}_0$ touches $u$ from below at $\widetilde{y}_0$. Since $u$ is a viscosity supersolution to $\sigma_2 = f$, we have
    \begin{equation}
    \label{eq:tilde-w}
        \sigma_2(D^2\widetilde{w}(\widetilde{y}_0)) \le f(\widetilde{y}_0)\le g(\widetilde{y}_0) + \|f-g\|_{L^\infty(B_1)}.
    \end{equation}
    On the other hand, $\sigma_2(D^2 v)=g\ge f- \frac{1}{2}f_0\ge \frac{1}{2}f_0$, we have
    \begin{eqnarray*}
         \sigma_2(D^2\widetilde{w}) &=& \sigma_2(D^2v + 2\epsilon I) \\
         & = & \sigma_2(D^2 v) + 2\epsilon(n-1) \Delta v + 2n(n-1)\epsilon^2  \\
         & > & g+2\sqrt{f_0}\epsilon(n-1) \\
         & = & g + 2(n-1) \|f-g\|_{L^\infty(B_1)},
    \end{eqnarray*}
    which contradicts \eqref{eq:tilde-w}. Therefore $\widetilde{w}\le u $ in $B_1$, and thus
    \begin{equation}
    \label{eq:lower_u-v}
        u-v\ge -\epsilon-\sup_{\partial B_1}|u-v|.
    \end{equation}
    Combining \eqref{eq:upper_u-v}, \eqref{eq:lower_u-v} and the definition of $\epsilon$, we get the desired conclusion.
\end{proof}

Applying Proposition \ref{prop:stability} to $u$ and $v_k$ and using \eqref{eq:approx_norms}, we obtain    the following uniform convergence.

\begin{corollary}
\label{coro:uniform_v_k}
    For all $k \geq 1$,
    \begin{eqnarray*}
    \|v_k - u\|_{C^0(\overline{B_1})} \leq \|u_k - u\|_{C^0(\overline{B_1})} +  \frac{1}{\sqrt{f_0}} \|f_k - f\|_{L^\infty(B_1)} \leq \frac{C}{k},
\end{eqnarray*}
where $C$ only depends on $f_0$. Consequently, $v_k \to u$ uniformly in $B_1$ as $k\to \infty$.
\end{corollary}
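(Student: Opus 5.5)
This is a direct application of Proposition \ref{prop:stability} with $v=v_k$ and $g=f_k$. The plan has three steps: check the hypotheses, bound the two terms on the right of \eqref{eq:stability} using \eqref{eq:approx_norms}, and add them up.

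\textbf{Hypotheses.} The function $u$ is a convex viscosity solution in $B_2$ with $f\ge f_0>0$. By Caffarelli--Nirenberg--Spruck, $v_k\in C^\infty(\overline{B_1})$ is a smooth 2-convex solution of $\sigma_2(D^2v_k)=f_k$. By \eqref{eq:approx_norms} we have
\[
\|f_k-f\|_{L^\infty(B_1)}\le \frac{f_0}{2k}\le \frac{f_0}{2}\quad\text{for all } k\ge1,
\]
which is the smallness condition the proposition requires.

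\textbf{Applying \eqref{eq:stability}.} The proposition gives
\[
\|v_k-u\|_{C^0(\overline{B_1})}\le \sup_{\partial B_1}|u-v_k|+\frac{1}{\sqrt{f_0}}\|f_k-f\|_{L^\infty(B_1)}.
\]
On $\partial B_1$ we have $v_k=u_k$, so the first term equals $\sup_{\partial B_1}|u-u_k|$. This is at most $\|u_k-u\|_{C^0(\overline{B_1})}$, which \eqref{eq:approx_norms} bounds by $1/k$. The second term is at most $\frac{1}{\sqrt{f_0}}\cdot\frac{f_0}{2k}=\frac{\sqrt{f_0}}{2k}$.

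\textbf{Conclusion.} Adding the two bounds gives
\[
\|v_k-u\|_{C^0(\overline{B_1})}\le \frac{1+\sqrt{f_0}/2}{k},
\]
so the constant $C=1+\sqrt{f_0}/2$ depends only on $f_0$. Letting $k\to\infty$ gives uniform convergence of $v_k$ to $u$ on $\overline{B_1}$.

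The only point needing care is that $v_k$ is smooth up to $\partial B_1$, which Proposition \ref{prop:stability} requires. Since $u_k\in C^\infty(\overline{B_1})$, $f_k$ is smooth with a positive lower bound, and the ball is uniformly convex, the Caffarelli--Nirenberg--Spruck theorem provides this boundary regularity.
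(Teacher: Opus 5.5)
Your proposal is correct and follows the paper's argument exactly: apply Proposition \ref{prop:stability} with $v=v_k$ and $g=f_k$, verify the smallness hypothesis from \eqref{eq:approx_norms}, and use $v_k=u_k$ on $\partial B_1$. Your explicit constant $C=1+\sqrt{f_0}/2$ and your remark on boundary smoothness from Caffarelli--Nirenberg--Spruck are accurate.
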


\subsection{Strict 2-convexity and uniform \texorpdfstring{$C^{1,1}$}{} estimates}

To use Proposition \ref{prop: Hessian} to obtain a uniform $C^{1,1}$ estimate for $v_k$, we require a 2-convex barrier function $w$.
A key property is the strict 2-convexity of convex viscosity solution to \eqref{eq:sigma_lambda}.
This property was established by Mooney \cite{MR4246798}, and it ensures the existence of a 2-convex barrier function.
The following lemma, which we quote from \cite{MR4246798} for the reader's convenience, gives the precise statement.

\begin{lemma}
\label{lemma:Mooney}
    Let $u$ be a convex viscosity solution of \eqref{eq:sigma_lambda} in $B_1$ with $f\geq f_0>0$.
    Then there exist a 2-convex function $w\in C^\infty(\mathbb R^n)$, constants $\delta,r>0$, and a connected open set $\Omega\subset B_{3/4}$ containing the origin, all depending only on $n$, $f_0$ and $\|u\|_{L^\infty(B_1)}$, such that
    \begin{itemize}
        \item [1.] $w>u$ in $\Omega$ and $w=u$ on $\partial\Omega$;
        \item [2.] $B_r \subset\Omega$ and $w-u\geq \delta $ in $B_r$.
    \end{itemize}
\end{lemma}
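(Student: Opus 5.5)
This lemma is quoted from Mooney, so the plan is to reconstruct his argument: strict 2-convexity of $u$ at the origin, followed by an explicit quadratic barrier. First I would normalize $u$. Since $u$ is convex and bounded, $\|u\|_{C^{0,1}(B_{7/8})}\le C(n,\|u\|_{L^\infty(B_1)})$. Subtracting the supporting affine function $\ell$ at $0$ does not change the equation, so I may assume $u\ge 0=u(0)$ and $Du(0)=0$ in the subgradient sense. The barrier I would aim for is
\[
w(x)=\ell(x)+\delta+\tfrac{M}{2}|x'|^2+\tfrac{\kappa}{2}|x''|^2 ,\qquad x=(x',x'')\in\mathbb R^{k}\times\mathbb R^{n-k}.
\]
This is smooth on $\mathbb R^n$, and it is 2-convex as soon as $k\ge 2$ or $\kappa>0$, since then $\Delta w>0$ and $\sigma_2(D^2w)>0$. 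I would take $\Omega$ to be the connected component of $\{w>u\}$ containing $0$. Then $w(0)-u(0)=\delta$, and by the Lipschitz bound $w-u\ge\delta/2$ on some $B_r$ with $r=r(\delta,\|u\|_{C^{0,1}})$. It remains to arrange $\Omega\subset B_{3/4}$, i.e.\ $w<u$ on $\partial B_{3/4}$, after which item 1 ($w=u$ on $\partial\Omega$) follows from the definition of $\Omega$.

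The key input is strict 2-convexity. Because $\sigma_2(D^2u)\ge f_0$ in the viscosity sense, the contact set $\{u=\ell\}$ contains no segment lying in a $2$-plane, more precisely no set of dimension $\ge n-1$. I would prove this along Mooney's lines. Suppose the contact set contains an $(n-1)$-dimensional piece. Then $u$ grows at most like $\operatorname{dist}(x,\text{contact set})$ in a single direction. Comparing with the quadratic polynomials $P=\tfrac{a}{2}x_n^2+\tfrac{\eta}{2}|x'|^2$, for which $\sigma_2(D^2P)\approx a\eta(n-1)$ can be made smaller than $f_0$, forces $u$ to touch a slightly lifted translate of $P$ from above somewhere. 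This contradicts the subsolution property, and it works because $\sigma_2$ degenerates only when at most one eigenvalue is nonzero. Consequently the contact set has dimension $\le n-2$, and a compactness argument over the class of normalized convex solutions with $\|u\|_{L^\infty}\le C$ and $f\ge f_0$ makes this quantitative. Concretely, I would seek a splitting $x=(x',x'')$ with $\dim x'\ge 2$ such that $u\ge \ell+c|x'|^2$-type growth, or more robustly $u\ge\ell+2\delta+\tfrac{M}{2}|x'|^2+\kappa$ on $\partial B_{3/4}$, uniformly in the class. Given this, choosing $\kappa$ small makes $w<u$ on $\partial B_{3/4}$, and all constants depend only on $n$, $f_0$ and $\|u\|_{L^\infty(B_1)}$.

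The main obstacle is this quantitative strict 2-convexity. The problem is that a convex solution of $\sigma_2\ge f_0$ may still contain a whole line segment, or even an $(n-2)$-plane, in its contact set, as Pogorelov-type examples show. So $u-\ell$ need not be positive on all of $\partial B_{3/4}$. The barrier must therefore be flat, with small $\kappa$, exactly in the degenerate directions and steep enough elsewhere that the section stays compactly inside $B_{3/4}$. My first attempt would be to take the section $\{u<\ell+h\}$ and use John's lemma to find its ellipsoid normalization. Mooney's growth estimate would show that at least two axes are bounded by $C h^{1/2}$-type quantities. I would then let $w$ be $\ell+h$ plus a 2-convex quadratic adapted to that ellipsoid, and use compactness to make $h$, and hence $\delta$, depend only on the stated data.
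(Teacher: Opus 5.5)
Your proposal has a genuine gap: the barrier you build cannot work. You take $w=\ell+\delta+\frac M2|x'|^2+\frac\kappa2|x''|^2$ with $\kappa\ge0$ small, so $w\ge\ell+\delta$ everywhere, and you need $w<u$ on $\partial B_{3/4}$. But the contact set $\{u=\ell\}$ can be a whole $(n-2)$-plane through the origin. For $n\ge3$, $u=\frac{\sqrt{f_0}}{2}(x_1^2+x_2^2)$ is a convex solution with $f\equiv f_0$, $\ell\equiv0$ and $\{u=\ell\}=\{x_1=x_2=0\}$. Then $w>u$ on that whole plane, so the component $\Omega$ of $\{w>u\}$ containing $0$ is not contained in $B_{3/4}$. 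Your ``robust'' condition $u\ge\ell+2\delta+\frac M2|x'|^2+\kappa$ on $\partial B_{3/4}$ is simply false there.

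In fact no \emph{convex} $w$ with $w(0)>u(0)$ can work. If $\{te:|t|\le3/4\}\subset\{u=\ell\}$, then $g(t)=(w-\ell)(te)$ is convex with $g(0)>0$, so $g$ cannot be $\le0$ on both sides of $0$. Hence one closed half-segment from $0$ to $\pm\frac34e$ lies in $\Omega$. You notice this obstacle in your last paragraph, but the remedy there goes the wrong way. A flat $w$ with small $\kappa\ge0$ still lies above $u$ on the contact set. Making $w$ steep in the $x'$ directions enlarges $\{w>u\}$ rather than shrinking it. And the sections $\{u<\ell+h\}$ are not compactly contained in $B_1$ (in the example they are solid cylinders around the plane), so a John-ellipsoid normalization is not available.

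The missing idea is that the lemma only asks for $w$ to be \emph{2-convex}, and this is exactly what lets $w$ bend \emph{below} $\ell$ in the degenerate directions. After normalizing $u\ge0=u(0)$, the only quantitative input needed is Mooney's Theorem 1.1 together with his compactness Proposition 4.1, which the paper simply cites. It gives, after a rotation, $u>\delta_0(n,f_0,\|u\|_{L^\infty(B_1)})$ on $B_{3/4}\cap\{x_1^2+x_2^2\ge(2n)^{-2}\}$; no quadratic growth or section geometry is required.

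The paper then takes $w_0=\frac{\delta_0}{3n}[2(n-2)(x_1^2+x_2^2)-(x_3^2+\cdots+x_n^2)]$.
\begin{itemize}
\item Its $n-2$ negative eigenvalues make $w_0<0\le u$ on the part of $\partial B_{3/4}$ near the degenerate plane, where $u$ may vanish.
\item Its two positive eigenvalues dominate, so $w_0$ is 2-convex, but they are of size $\delta_0$. Hence $w_0\le\delta_0/2<u$ on the rest of $\partial B_{3/4}$.
\end{itemize}
Thus $w_0\le u-\delta_1$ on $\partial B_{3/4}$. Taking $w=w_0+\delta_1/2$ and $\Omega$ the component of $\{w>u\}$ containing $0$, and choosing $r,\delta$ from the Lipschitz bound, finishes as in your first paragraph.

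So the coefficient in the good directions must be small (of order $\delta_0$), not large. The coefficient in the degenerate directions must be strictly negative, with $|\kappa|$ small compared with $M/n$; 2-convexity does not follow merely from $k\ge2$.

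Separately, your sketch of strict 2-convexity does not produce a contradiction as written. With only $u\le L|x_n|$ near a flat piece, $P=\frac a2x_n^2+\frac\eta2|x'|^2$ lies above $u$ on the boundary of a cylinder $\{|x'|<\rho,\ |x_n|<h\}$ only if $a\eta\ge L^2/\rho^2$. Then $\sigma_2(D^2P)\ge(n-1)L^2/\rho^2$ is not small. Citing Mooney at that step, as the paper does, is the way to go.
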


\begin{proof}
    After subtracting the supporting linear function at the origin, we may assume
    $$ u(0)=0,   \quad u\geq 0 \text{ in } B_1. $$
    By the strict 2-convexity result \cite[Theorem 1.1]{MR4246798}, the set $\{ u=0 \}$ is contained in an $(n-2)$-dimensional subspace.
    A compactness argument (see \cite[Proposition 4.1]{MR4246798}) yields a constant $\delta_{0}=\delta_0(n,f_0,\|u\|_{L^{\infty}(B_1)})>0$, such that, after a rotation,
    $$ u(x) > \delta_0, \quad \forall x\in  B_{3/4} \cap\{x_1^2 + x_2^2 \geq (2n)^{-2}\}. $$
    Consider the 2-convex function
    $$ w_0(x) := \frac{\delta_0}{3n} \big[ 2(n-2)(x_1^2+x_2^2)-(x_3^2+\cdots+x_n^2) \big]. $$
    Then
    \begin{align*}
        w_0(x) \leq \frac{\delta_0}{2} < u(x), \qquad  & x\in \partial B_{3/4} \cap\{x_1^2 + x_2^2 \geq (2n)^{-2}\}; \\
        w_0(x) < 0 \leq u(x), \qquad  & x\in \partial B_{3/4} \cap\{x_1^2 + x_2^2 < (2n)^{-2}\}.
    \end{align*}
    Hence $w_0 < u$ on $\partial B_{3/4}$ and there exists $\delta_1 = C(n,\delta_0, \|u\|_{L^\infty(B_1)})>0 $ such that
    $$ w_0(x) - u(x) \leq -\delta_1, \quad \forall x\in \partial B_{3/4}. $$
    Set $w(x) = w_0(x) + \delta_1/2 $.
    Then $w<u$ on $\partial B_{3/4}$, and $(w-u)(0)=\delta_1/2 > 0$.

    Let $\Omega$ be the connected component of $\{u<w\}$ that contains the origin. Because $w<u$ on $\partial B_{3/4}$, we have $\Omega\subset B_{3/4}$.
    Since $w-u$ is continuous and positive at the origin, there exists $r>0$, depending only on $\delta_1$ and $\|u\|_{L^\infty(B_1)}$, such that  $B_r \subset\Omega$
    and $w-u\geq \delta_1/4=:\delta $ in $B_r $.  This completes the proof.
\end{proof}

\begin{proof}[Proof of Theorem \ref{Thm_sigma}]
From Corollary \ref{coro:uniform_v_k}, we have $v_k \to u$ uniformly in $B_1$.
The gradient estimate for $k$-Hessian equations with Lipschitz right-hand side (see Chou-Wang \cite[Theorem 3.2]{MR1835381}) gives
$$ \|v_k\|_{C^{0,1}(B_{3/4})}  \leq  C(n, f_0, \|u\|_{L^\infty(B_1)}, \|f_k\|_{C^{0,1}(B_1)}). $$
Rescaling Proposition \ref{prop: Hessian} to $B_{3/4}$ and using Lemma \ref{lemma:Mooney}, we get
\begin{equation}
    \|D^2 v_k\|_{L^\infty(B_{1/2})} \leq C(n,f_0, \|u\|_{L^\infty(B_1)}, \|f\|_{C^{0,1}(B_1)}).
    \label{eq:v_k_C11_bound}
\end{equation}
With this $C^{1,1}$ estimate, the equation becomes uniformly elliptic on the branch $\{\Delta v_k > 0\}$. The celebrated Evans-Krylov-Safonov theory gives the $C^{2,\alpha}$ estimates for any $\alpha\in(0,1)$,
$$ \|v_k\|_{C^{2,\alpha}(\overline{B_{1/4}})} \leq C(n, \alpha, f_0, \|u\|_{L^\infty(B_1)}, \| f\|_{C^{0,1}(B_1)}). $$
Up to a subsequence, we have $v_k\overset{C^2}{\longrightarrow} u$, and
$$ \|u\|_{C^{2}(\overline{B_{1/4}})} \leq C(n, f_0, \|u\|_{L^\infty(B_1)}, \| f\|_{C^{0,1}(B_1)}), $$
which completes the proof of Theorem \ref{Thm_sigma}.
\end{proof}

\appendix
\section{Appendix}
\begin{lemma}
\label{lemma:optim}
Consider the optimization problem of the quadratic
\[
 q^*=\min\{q(x): x \in \mathbb{R}^n  \    \text{s.t.}  \quad d^T x = c\}, \qquad q(x):=  x^T A x - (b^T x)^2 ,
\]
where \(A \in \mathbb{R}^{n \times n}\) is symmetric positive definite, \(b, d \in \mathbb{R}^n\) with \(d \neq 0\), and \(c \in \mathbb{R}\). Define
\[
\alpha = b^T A^{-1} b, \quad \beta = b^T A^{-1} d, \quad \gamma = d^T A^{-1} d , \quad \eta = 1 - \alpha + \frac{\beta^2}{\gamma}.
\]
Then $\gamma>0$ and the following hold:

\begin{enumerate}
    \item The optimization problem is bounded below if and only if either
    \begin{itemize}
        \item \(\eta > 0\), or
        \item \(\eta = 0\) and \(\beta c = 0\).
    \end{itemize}

    \item If \(\eta > 0\), the  minimum value is
        \[
        q^* = \frac{c^2(1-\alpha)}{\gamma(1-\alpha) + \beta^2};
        \]
     If \(\eta = 0\) and \(\beta c = 0\), the minimum value is $q^* = \dfrac{c^2}{\gamma}$.
\end{enumerate}
\end{lemma}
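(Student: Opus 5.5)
This lemma is a finite-dimensional quadratic minimization. I would reduce it to a one-variable problem by fixing the value of $s=b^Tx$ along with the constraint $d^Tx=c$. First, $\gamma=d^TA^{-1}d>0$ holds because $A^{-1}$ is positive definite and $d\neq 0$. For fixed $s\in\mathbb{R}$, the inner problem is to minimize $x^TAx$ subject to the two linear constraints $d^Tx=c$ and $b^Tx=s$. Its value is the standard Lagrange/Schur expression $(c,s)\,G^{-1}(c,s)^T$ when the Gram matrix
\[
G=\begin{pmatrix}\gamma&\beta\\ \beta&\alpha\end{pmatrix}
\]
(Gram matrix of $d,b$ in the $A^{-1}$ inner product) is invertible. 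If $b,d$ are $A^{-1}$-parallel, the constraint set is empty unless $s$ equals $\beta c/\gamma$. Then
\[
q^*=\inf_{s}\big[(c,s)G^{-1}(c,s)^T-s^2\big].
\]

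Next, I would complete the square in $c$ rather than in $s$ to get a clean form. By the Cauchy--Schwarz argument, $(c,s)G^{-1}(c,s)^T=\frac{c^2}{\gamma}+\frac{(s-\beta c/\gamma)^2}{\alpha-\beta^2/\gamma}$ whenever $\alpha\gamma>\beta^2$. Set $D:=\alpha-\beta^2/\gamma=1-\eta$. The function to minimize is then
\[
h(s)=\frac{c^2}{\gamma}+\frac{(s-s_0)^2}{D}-s^2,\qquad s_0=\frac{\beta c}{\gamma}.
\]
The leading coefficient is $\frac1D-1=\frac{\eta}{D}$, so $h$ is bounded below iff $\eta>0$, or $\eta=0$ and the linear term vanishes. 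When $\eta=0$ we have $D=1$ and $h(s)=c^2/\gamma-2ss_0+s_0^2$, so boundedness needs $s_0=0$, i.e. $\beta c=0$, and then $q^*=c^2/\gamma$. This matches item 2.

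For $\eta>0$, I minimize the quadratic in $s$. The minimizer is $s=s_0/(1-D)=s_0/\eta$, and the minimum of $\frac{(s-s_0)^2}{D}-s^2$ equals $-\frac{s_0^2}{\eta}$. This gives
\[
q^*=\frac{c^2}{\gamma}-\frac{\beta^2c^2}{\gamma^2\eta}=\frac{c^2(\gamma\eta-\beta^2/\gamma)}{\gamma^2\eta}.
\]
Since $\gamma\eta=\gamma(1-\alpha)+\beta^2$, simplifying yields $q^*=\frac{c^2(1-\alpha)}{\gamma(1-\alpha)+\beta^2}$, as claimed.

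The main obstacle is the degenerate case where $b$ and $d$ are $A^{-1}$-parallel, so $D=0$ and $G$ is singular. In that case $s$ is forced to equal $s_0$ on the feasible set, and the minimization over the constraint set is automatically bounded. I would check this separately. When $b=\mu d$ we get $\alpha=\mu^2\gamma$ and $\beta=\mu\gamma$, hence $\eta=1>0$, and the value $c^2/\gamma-s_0^2$ agrees with the formula above. The case $D<0$ cannot occur by Cauchy--Schwarz, so the case analysis is complete. For sharpness of unboundedness when $\eta<0$, I would note that $s$ ranges over all of $\mathbb{R}$ when $D>0$, so the negative leading coefficient drives $h\to-\infty$.
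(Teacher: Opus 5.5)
Your proof is correct, and it takes a different route from the paper's.

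\textbf{The paper's route.} It whitens to $A=I$ and writes the hyperplane $d^Tx=c$ as $x_0+z$ with $z\perp d$. It splits $b=b_d+b_\perp$ and sets $z=t\,b_\perp/\|b_\perp\|+w$, so the quadratic part becomes $\|w\|^2+\eta t^2$. It then completes the square in $t$ and writes down the minimizer $x^*$ explicitly.

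\textbf{Your route.} You fiber the constraint set over $s=b^Tx$ and evaluate the inner problem with the two-constraint least-norm formula $(c,s)G^{-1}(c,s)^T$. That leaves a scalar quadratic $h(s)$ whose leading coefficient $\eta/D$ settles the cases $\eta>0$, $\eta=0$, $\eta<0$ at once.

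\textbf{How they relate.} Underneath, the two are the same decomposition. In the paper's coordinates $b^Tx=b^Tx_0+\|b_\perp\|\,t$, so fixing $s$ is the same as fixing $t$. Your Schur-complement formula packages the minimization over $w$, which the paper does by hand to get $w^*=0$.

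\textbf{What each buys.} Yours is shorter and never needs the minimizer. It also handles the ``only if'' direction for $\eta<0$ explicitly, which the paper leaves implicit. The paper's is more elementary, using no Lagrange multipliers and no inversion of $G$, and it gives $x^*$ explicitly. Both have to treat $b\parallel d$ separately, and you do it correctly: there $\eta=1$, and $c^2/\gamma-s_0^2$ agrees with the formula.

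\textbf{An algebra slip to fix.} The combination should read
\[
\frac{c^2}{\gamma}-\frac{\beta^2c^2}{\gamma^2\eta}=\frac{c^2(\gamma\eta-\beta^2)}{\gamma^2\eta},
\]
with $\beta^2$ rather than $\beta^2/\gamma$ in the numerator. With this correction, $\gamma\eta-\beta^2=\gamma(1-\alpha)$ gives exactly $\frac{c^2(1-\alpha)}{\gamma(1-\alpha)+\beta^2}$.

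\textbf{A labelling point.} The identity for $(c,s)G^{-1}(c,s)^T$ comes from $2\times 2$ block inversion (a Schur complement), not from Cauchy--Schwarz. Cauchy--Schwarz is what gives $D\ge 0$.
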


\begin{proof}
Since \(A\) is positive definite, let \(A^{1/2}\) be its positive definite square root. Define
\[
y = A^{1/2} x, \quad \tilde{b} = A^{-1/2} b, \quad \tilde{d} = A^{-1/2} d.
\]
Then the problem becomes
\[
\min \{ \bigl( y^T y - (\tilde{b}^T y)^2 \bigr): \quad  y \in \mathbb{R}^n \quad \text{s.t.} \quad \tilde{d}^T y = c\}
\]
with
\[
\alpha = \tilde{b}^T \tilde{b}, \quad \beta = \tilde{b}^T \tilde{d}, \quad \gamma = \tilde{d}^T \tilde{d}, \quad \eta = 1 - \alpha + \frac{\beta^2}{\gamma}.
\]
Thus, without loss of generality, we may assume \(A = I\) and  consider
\[
\min \{ q(x): \quad x \in \mathbb{R}^n  \quad \text{s.t.} \quad d^T x = c\},   \qquad  q(x)=   \|x\|^2 - (b^T x)^2 .
\]
In this new expression, we have \(\alpha = b^T b = \|b\|^2\), \(\beta = b^T d\), \(\gamma = d^T d = \|d\|^2>0\), and \(\eta = 1 - \alpha + \beta^2/\gamma\).
The feasible set is \(S = \{ x \in \mathbb{R}^n : d^T x = c \}\). Since \(d \neq 0\) and $\gamma>0$, we choose a  solution
\[
x_0 = \frac{c}{\gamma} d,
\]
satisfying \(d^T x_0 = c\).
Every \(x \in S \) can be written as \(x = x_0 + z\) for some \(z \in D_0 = \{ z \in \mathbb{R}^n : d^T z = 0 \}\). Substituting into the objective function, we get
\[
q(x) =  \bigl[ \|z\|^2 - (b^T z)^2 \bigr]  + 2\bigl[ x_0 - (b^T x_0) b \bigr]^T z +  \|x_0\|^2 - (b^T x_0)^2 .
\]
Define
\[
Q(z) := \|z\|^2 - (b^T z)^2, \quad g := x_0 - (b^T x_0) b, \quad  q_0 := \|x_0\|^2 - (b^T x_0)^2 ,
\]
then $g$ is a constant vector and $q_0$ is a constant.

So the problem is reduced  to
\[
\min_{z\in D_0} q(z) := \min_{z\in D_0}\bigl[Q(z) + 2 g^T z + q_0\bigr].
\]
Decompose \(b\) into components parallel and orthogonal to \(d\) respectively,
\[
b = b_d + b_\perp, \quad b_d = \frac{\beta}{\gamma} d, \quad b_\perp = b - b_d.
\]
Then \(d^T b_\perp = 0\), so \(b_\perp \in D_0\). Moreover, $\|b_\perp\|^2 = b_\perp^T b_\perp = \alpha - \dfrac{\beta^2}{\gamma}.$
\begin{figure}[htbp]
        \centering
        \includegraphics[width=0.8\linewidth]{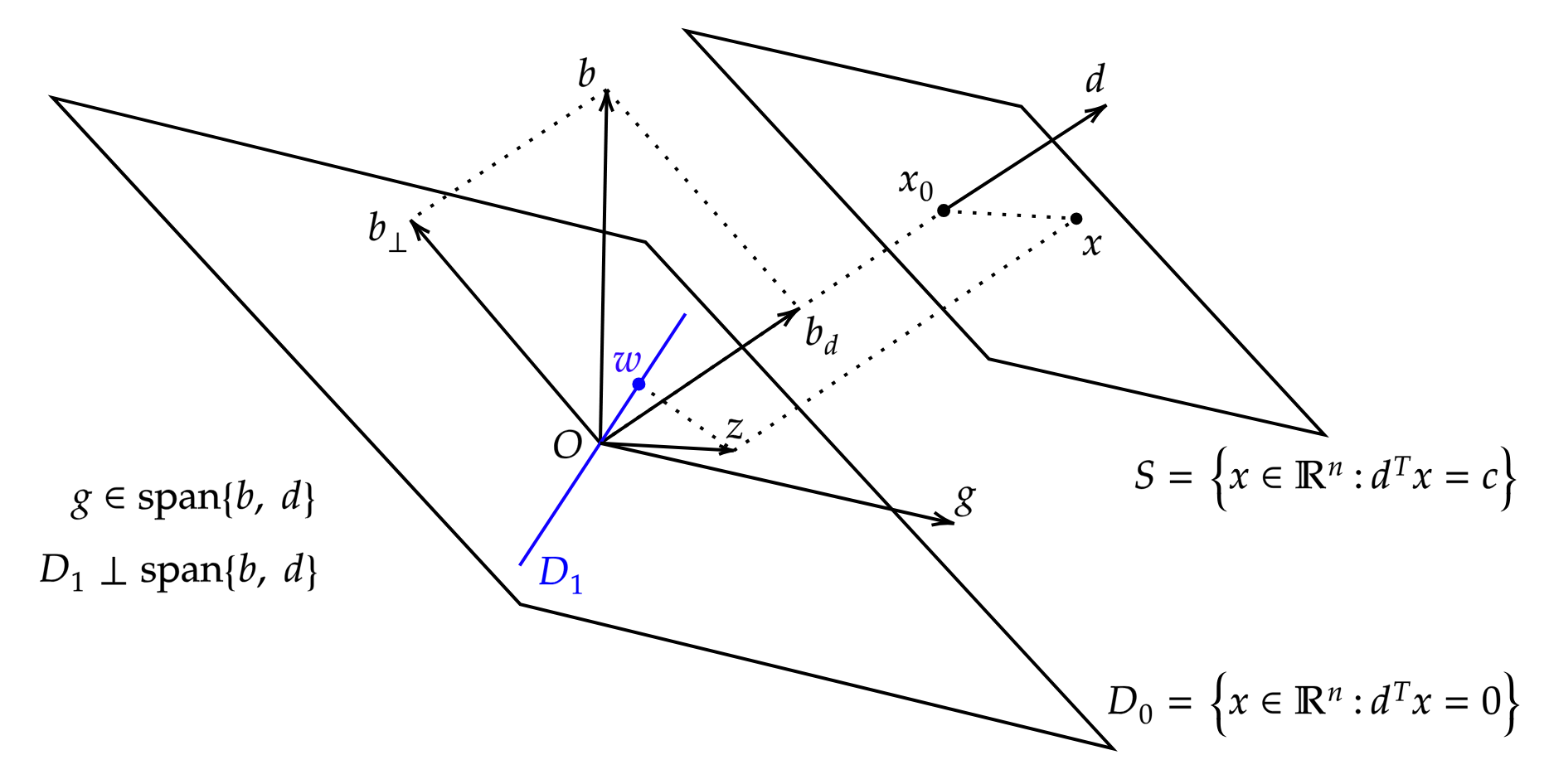}
        \caption{Decomposition of vectors}
        \label{fig:decompositions}
\end{figure}

Note that if \(b_\perp = 0\), then \(b\) is parallel to \(d\), and \(\eta = 1 - \alpha + \beta^2/\gamma =1 > 0\). So in the following we assume \(b_\perp \neq 0\) for the decomposition, since the case \(b_\perp = 0\) will be covered by the arguments of the case    \(\eta > 0\).

For any \(z \in D_0\), we have \(b^T z = b_\perp^T z\). Hence,
\[
Q(z) = \|z\|^2 - (b_\perp^T z)^2.
\]
Write \(z = t \frac{b_\perp}{\|b_\perp\|} + w\), where $t\in R$ and \(w \in D_1: = D_0 \cap \{x\in \mathbb{R}^n\mid b_\perp^T x = 0\}\). Then
\[
Q(z) = \|w\|^2 + t^2 (1 - \|b_\perp\|^2).
\]
Since \(\|b_\perp\|^2 = \alpha - \beta^2/\gamma\), we have
\begin{eqnarray}\label{eq:b_orthogonal}
    1 - \|b_\perp\|^2 = 1 - \alpha + \frac{\beta^2}{\gamma} = \eta.
\end{eqnarray}

\begin{itemize}
    \item \textbf{Case 1: \(\eta > 0\).} Then \(Q\) is positive definite on \(D_0\), so \(q(z)\) is strictly convex and is bounded below.
    Since $g = \frac{c}{\gamma}d - \frac{c\beta }{\gamma}b \in \operatorname{span}\{b,d\}$, we have $g \perp D_1$ and thus $g^Tw=0$. Then
    \begin{eqnarray*}
        q(z) &=& \|w\|^2 + \eta t^2 + 2g^T(w + t\frac{b_\perp}{\|b_\perp\|}) + q_0 \\
        &=& \|w\|^2 + \eta t^2 + \frac{2 g^T b_\perp}{\|b_\perp\|} t + q_0 \\
        & =& \|w \|^2 + \eta \left( t + \frac{g^T b_\perp}{\eta \|b_\perp\|} \right)^2 - \frac{(g^T b_\perp)^2}{\eta \|b_\perp\|^2} +q_0\\
        &:=& \|w \|^2 +f(t).
    \end{eqnarray*}
   Obviously, $\|w \|^2$ attains its  minimum  over \(w \in D_1\) at \(w^* = 0\), and $f(t) $ attains its  minimum  over  $\mathbb{R}$  at  \(t^* = -g^T b_\perp/(\eta \|b_\perp\|)\).
   The corresponding $q(x)$  attains its  minimum at \(x^* = x_0 + z^* = x_0 + w^* + t^* \frac{b_\perp}{\|b_\perp\|}\), which  can be expressed as
    \[
    x^* = \frac{c\beta}{\beta^2 + \gamma(1-\alpha)} b + \frac{c(1-\alpha)}{\beta^2 + \gamma(1-\alpha)} d.
    \]
    The minimum value is
    \[
    q^* = \frac{c^2(1-\alpha)}{\beta^2 + \gamma(1-\alpha)}.
    \]

    \item \textbf{Case 2: \(\eta = 0\).} In this case we have  $\|b_\perp\|^2=1.$  Write \(z = t b_\perp + w\) with \( w \in D_1\). Using $g\in span\{b,d\}\subset D_1^\perp$,
    we have
    \[
    q(z) = \|w\|^2  + 2t (g^T b_\perp) + q_0.
    \]
   Hence $q(z)$ is  bounded from below if and only if \(g^T b_\perp = 0\). Otherwise,
     \(q\to -\infty\)  when either $t\to +\infty $ or $t\to -\infty $.   Since
    \[
    g^T b_\perp = \frac{c}{\gamma} d^T b_\perp - \frac{c\beta}{\gamma} b^T b_\perp= - \frac{c\beta}{\gamma},
     \]
    so  \(g^T b_\perp = 0\) is equivalent  to \(\beta c = 0\).  In that case,
      \[
        q(z)=\|w\|^2+q_0.
        \]
  Therefore,  the    minimum  is attained at $w=0$ with arbitrary $t$, and the minimum value is
        \[
        q^* = q_0= \frac{c^2}{\gamma}.
        \]
\end{itemize}

Combining the above cases, the problem is bounded from below if and only if either \(\eta > 0\), or \(\eta = 0\) and \(\beta c = 0\). The
desired  minimum values  have been obtained. This completes the proof.
\end{proof}

\subsection{Viscosity solution of \texorpdfstring{$k$}{}-Hessian equation and Monge-Amp\`ere equation}

In this subsection, we show that a viscosity solution of a Monge-Amp\`ere equation in $\mathbb{R}^k$, when trivially extended to $\mathbb{R}^n$ with $2\leq k < n$, yields a viscosity solution of a corresponding $k$-Hessian equation.

\begin{definition}[Viscosity solution of Monge--Amp\`ere equation]
Let $\Omega \subset \mathbb{R}^k$,   $f \in C(\Omega)$ be a positive function, and $u \in C(\Omega)$ be a convex function.
\begin{enumerate}
    \item[(i)] We say $u$ is a viscosity subsolution of
    \begin{equation}
    \label{eq:appendix_MA}
        \det D^2 u = f \quad \text{ in } \Omega,
    \end{equation}
    if for any $x_0 \in \Omega$ and any $\psi \in C^2(\Omega)$ such that $D^2\psi(x_0) \geq 0$ and $u-\psi$ has a local maximum at $x_0$, then we have
    \[
    \det D^2\psi(x_0) \geq f(x_0).
    \]
    \item[(ii)] We say $u$ is a viscosity supersolution of \eqref{eq:appendix_MA} if for any $x_0 \in \Omega$ and any $\psi \in C^2(\Omega)$ such that $D^2\psi(x_0) \geq 0$ and $u-\psi$ has a local minimum at $x_0$, then we have
    \[
    \det D^2\psi(x_0) \leq f(x_0).
    \]
    \item [(iii)] We say $u$ is a viscosity solution of \eqref{eq:appendix_MA} if $u$ is both a viscosity subsolution and a viscosity supersolution of \eqref{eq:appendix_MA}.
\end{enumerate}
\end{definition}
It is well known that a viscosity solution is equivalent to an Aleksandrov solution if $f\in C(\Omega)$.

Let $\mathcal{S}_n$ denote the set of all $n\times n$ symmetric  matrices. For   $M \in  \mathcal{S}_n$, denote by $\sigma_j(M)$ the $j$-th elementary symmetric polynomial of its eigenvalues.
Let $\Gamma_k$ be the G\r{a}rding cone:
\[
\Gamma_k = \{ M \in \mathcal{S}_n : \sigma_j(M) > 0 \text{ for } j=1,\cdots,k \}.
\]

\begin{definition}[Viscosity solution for the $k$-Hessian equation]
Let $\Omega \subset \mathbb{R}^n$,   $F \in C(\Omega)$ be a positive function, and $v \in C(\Omega)$.
\begin{enumerate}
    \item[(i)] We say $v$ is a viscosity subsolution of
    \begin{equation}
    \label{eq:appendix_k-Hessian}
        \sigma_k(D^2 v) = F \quad \text{ in } \Omega,
    \end{equation}
    if for any $x_0 \in \Omega$ and any $\phi \in C^2(\Omega)$ such that $D^2 \phi(x_0) \in \Gamma_k$ and $v-\phi$ has a local maximum at $x_0$, then we have
    \[
    \sigma_k(D^2\phi)(x_0) \geq F(x_0).
    \]
    \item[(ii)] We say $v$ is a viscosity supersolution of \eqref{eq:appendix_k-Hessian} if for any $x_0 \in \Omega$ and any $\phi \in C^2(\Omega)$ such that $D^2 \phi(x_0) \in \Gamma_k$ and $v-\phi$ has a local minimum at $x_0$, then we have
    \[
    \sigma_k(D^2\phi)(x_0) \leq F(x_0).
    \]
    \item [(iii)] We say $v$ is a viscosity solution of \eqref{eq:appendix_k-Hessian} if $v$ is both a viscosity subsolution and a viscosity supersolution of \eqref{eq:appendix_k-Hessian}.
\end{enumerate}
\end{definition}

\begin{theorem}
\label{thm:appendix-viscosity}
Let $n > k \ge 1$.
Write $x = (x', x'') \in \mathbb{R}^n$ with $x' \in \mathbb{R}^k$, $x'' \in \mathbb{R}^{n-k}$.
Let $\Omega' \subset \mathbb{R}^k$ be a convex domain, and set $\Omega = \Omega' \times \mathbb{R}^{n-k} \subset \mathbb{R}^n$.
Assume $u \in C(\Omega')$ is a viscosity solution of
\[
\det D_{x'}^2 u = f(x') \qquad x' \in \Omega',
\]
where $f \in C(\Omega')$, $f>0$.
Define the trivial extension $v: \Omega \to \mathbb{R}$ by
\[
v(x) = v(x', x'') := u(x').
\]
Then $v$ is a viscosity solution of the $k$-Hessian equation
\begin{equation}
\label{eq:appendix_hessian-v}
\sigma_k(D_x^2 v) = F(x) \qquad x \in \Omega
\end{equation}
with $F(x) := f(x')$.
\end{theorem}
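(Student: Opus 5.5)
The plan is to verify the sub- and supersolution properties of $v$ directly from the definitions, by reducing each test function $\phi$ on $\Omega\subset\mathbb{R}^n$ to a test function $\psi$ on $\Omega'\subset\mathbb{R}^k$ and comparing $\sigma_k$ of an $n\times n$ matrix with the determinant of a $k\times k$ block. Throughout, write a symmetric matrix $M\in\mathcal S_n$ in block form
\[
M=\begin{pmatrix} M' & N\\ N^T & M''\end{pmatrix},\qquad M'\in\mathcal S_k ,\ M''\in\mathcal S_{n-k}.
\]
The main tool is the expansion of $\sigma_k(M)$ as the sum of all $k\times k$ principal minors.

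\textbf{Subsolution.} Let $x_0=(x_0',x_0'')$ and let $\phi\in C^2$ with $D^2\phi(x_0)\in\Gamma_k$ be such that $v-\phi$ has a local max at $x_0$. I would first reduce to the slice. Fixing $x''=x_0''$, the function $\psi(x'):=\phi(x',x_0'')$ makes $u-\psi$ have a local max at $x_0'$, and $D^2\psi(x_0')=M'$. To invoke the Monge--Amp\`ere subsolution property I need $M'\ge0$. Since $u$ is convex and $\psi$ touches it from above, the second difference quotients of $\psi$ at $x_0'$ dominate those of $u$, which are nonnegative. Hence $M'\ge0$, and $\det M'\ge f(x_0')$.

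Next I would show that $M''\ge 0$ and $N=0$. For fixed $x'=x_0'$, $v$ is constant in $x''$, so $-\phi(x_0',\cdot)$ has a local max at $x_0''$, giving $M''\ge0$. More generally, set $g(h',h''):=\phi(x_0+h)-\phi(x_0)$, so that $g(h',h'')\ge u(x_0'+h')-u(x_0')\ge p\cdot h'$ with $p\in\partial u(x_0')$. Then $g(h)-p\cdot h'\ge0$ near $0$ with equality at $0$, so $D^2\phi(x_0)\ge0$ as a full $n\times n$ matrix. That is enough for the final step.

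The subsolution inequality then follows by monotonicity. All $k\times k$ principal minors of a positive semidefinite matrix are nonnegative, and $\det M'$ is one of them, so $\sigma_k(D^2\phi(x_0))\ge\det M'\ge f(x_0')=F(x_0)$.

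\textbf{Supersolution.} This is the main obstacle. Let $D^2\phi(x_0)\in\Gamma_k$ with $v-\phi$ having a local min at $x_0$. Now $\phi$ touches from below, so convexity gives no sign information on $M'$, and I would argue by contradiction. Suppose $\sigma_k(D^2\phi(x_0))>f(x_0')$. The $x''$-direction gives $M''\le0$, since $\phi(x_0',\cdot)$ has a local max at $x_0''$. Define the restriction
\[
\tilde\psi(x'):=\max_{|x''-x_0''|\le\rho}\phi(x',x'').
\]
This $\tilde\psi$ lies below $u$ and touches it at $x_0'$, but it is not $C^2$. So I would instead use the quadratic approximation: replace $\phi$ by its second-order Taylor polynomial minus $\epsilon|x-x_0|^2$, which only strengthens the strict inequality by continuity, and then maximize in $x''$. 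If $M''<0$, which I get by perturbing with $-\epsilon|x''-x_0''|^2$, this yields an explicit quadratic $\psi$ on $\mathbb{R}^k$ with Hessian equal to the Schur complement $S=M'-NM''^{-1}N^T$. Moreover $\psi\le u$ near $x_0'$ with equality at $x_0'$, up to $o(|h|^2)$ terms handled by the $\epsilon$ margin.

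The key algebraic claim is then: if $M\in\Gamma_k$ with $M''<0$, then $\sigma_k(M)\le\det S$ and $S\ge0$. It is this claim, not the sign of $M'$, that drives the contradiction. Granting it, a standard touching-from-below argument for convex $u$ forces $S\ge0$, and the Monge--Amp\`ere supersolution property gives $\det S\le f(x_0')$. Hence $\sigma_k(M)\le f(x_0')$, contradicting the assumption. The linear-algebra fact is the step I expect to be hardest. I would first try to prove it by simultaneously diagonalizing, writing $\sigma_k(M)$ as a sum of principal minors, and using Schur's formula together with the negativity of $M''$ to show that every minor involving $x''$-indices only decreases the sum. If that fails, I would try a rank argument: a matrix in $\Gamma_k$ cannot have more than $n-k$ negative eigenvalues. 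I would also handle the degenerate case $M''$ only semidefinite by the $\epsilon$-perturbation and a limit.
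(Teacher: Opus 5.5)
Your subsolution half is correct and is essentially the paper's argument: the supporting-hyperplane comparison gives $D^2\phi(x_0)\ge0$ as a full matrix, and then $\sigma_k(M)\ge\det M'\ge f(x_0')$. Your intermediate claim $N=0$ is false. For $k=1$, $u=x_1^2/2$, the function $\phi=x_1^2+x_1x_2+x_2^2$ touches $v$ from above at $0$ with $N=1$. You never use it, so it does no harm.

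The supersolution half has a genuine gap. Everything rests on the inequality $\sigma_k(M)\le\det S$, which you leave unproved, and the route you propose cannot prove it. The inequality is not true minor by minor, and it is not true without the full cone condition. Take $k=2$, $n=4$, $M=\operatorname{diag}(\epsilon,\epsilon,-1,-1)$, so $N=0$ and $S=M'$. The minor on the two $x''$-indices equals $1>0$, so it increases the sum. Moreover $\sigma_2(M)=\epsilon^2-4\epsilon+1>\epsilon^2=\det S$ for $\epsilon<1/4$. This $M$ is excluded only because $\sigma_1(M)<0$, so the lower-order conditions $\sigma_j(M)>0$, $j<k$, must enter globally.

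Your rank argument does not supply this either. Via Sylvester's law of inertia for the congruence $M\sim\operatorname{diag}(S,M'')$, it shows $S>0$, and that is the right way to get positivity of $S$. Your sentence attributing $S\ge0$ to touching a convex function from below is wrong; as you noted yourself, touching from below carries no sign information. But inertia gives no upper bound on $\sigma_k(M)$.

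The missing idea is G\r{a}rding's concavity of $\sigma_k^{1/k}$ on $\Gamma_k$, used in the form $k\,\sigma_k(Y)^{1/k}\sigma_k(X)^{1-1/k}\le\left.\frac{d}{dt}\right|_{t=0}\sigma_k(X+tY)$ for $X,Y\in\Gamma_k$. The paper takes $X=\operatorname{diag}(M',0)$ and $Y=M$. To first order in $t$, only the minor on $\{1,\dots,k\}$ and the minors with exactly one $x''$-index contribute. This gives $k\det M'+\operatorname{tr}(M'')\,\sigma_{k-1}(M')\le k\det M'$, hence $\sigma_k(M)\le\det M'$.

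With this, the plain slice $\psi(x')=\phi(x',x_0'')$ already suffices. Its Hessian $M'$ is positive definite because the trace of $M\in\Gamma_k$ over any $(n-k+1)$-dimensional subspace is positive, while $M''\le0$; apply this to the span of $(\xi',0)$ and $\{0\}\times\mathbb{R}^{n-k}$. So your Taylor truncation, $\epsilon$-perturbation and maximization in $x''$ are unnecessary.

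If you prefer to keep your Schur-complement test function, the same inequality with $X=\operatorname{diag}(S,0)$ closes your argument. Here $X\in\Gamma_k$ because $S>0$. The derivative is $\det S\,\operatorname{tr}(S^{-1}M')+\operatorname{tr}(M'')\,\sigma_{k-1}(S)\le k\det S$, since $M'\le S$ and $M''<0$. Hence $\sigma_k(M)\le\det S$.
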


\begin{proof}
We verify that $v$ is both viscosity subsolution and supersolution of \eqref{eq:appendix_hessian-v}.

\noindent \textbf{Step 1: $v$ is subsolution of \eqref{eq:appendix_hessian-v}.}

Assume that  $\phi \in C^2(\Omega)$ and $x_0 = (x_0', x_0'') \in \Omega$ such that $v - \phi$ has a local maximum at $x_0$ and $D^2\phi(x_0) \in \Gamma_k$. We will show that $\sigma_k(D^2\phi)(x_0) \geq f(x_0')$.

Since $u$ is convex in $\mathbb{R}^k$, we have $v$ is also convex in $\mathbb{R}^n$. Then
$$ v(x) -v(x_0) \geq p \cdot (x-x_0), \quad \forall p \in \partial v(x_0). $$
Since $v-\phi$ has local maximum at $x_0$, we have
$$ \phi(x) -\phi(x_0) \geq v(x) - v(x_0) \geq p \cdot (x-x_0) $$
for all $x$ in a neighborhood of $x_0$.   This, together with   $\phi\in C^2$, implies that
$$ \phi(x) - \phi(x_0) = D\phi(x_0) \cdot(x-x_0) + \frac{1}{2} (x-x_0)^TD^2\phi(x_0) (x-x_0) + o(|x-x_0|^2) \geq p\cdot(x-x_0). $$
If $\partial v(x_0)$ has more than one element, this inequality cannot hold for all $p\in \partial v(x_0)$. Hence $\partial v(x_0) = \{ Dv(x_0)\}$ and $D\phi(x_0) = Dv(x_0)$.
Consequently,
$$ \frac{1}{2} (x-x_0)^TD^2\phi(x_0) (x-x_0) + o(|x-x_0|^2) \geq 0 $$
for all $x$ in a neighborhood of $x_0$, which forces $D^2 \phi(x_0) \geq 0$.
Denote
\[
M: = D^2\phi(x_0) = \begin{pmatrix} A & B \\ B^T & C \end{pmatrix} \geq 0,
\]
where $A := D_{x'}^2\phi(x_0) \in \mathbb{R}^{k \times k}, B:= D_{x'}D_{x''}\phi(x_0)\in \mathbb{R}^{k \times (n-k)}$, and $ C = D_{x''}^2\phi(x_0) \in \mathbb{R}^{(n-k) \times (n-k)}$.
Define $\psi(x') := \phi(x', x_0'')  $.
Then $\psi(x') \in C^2(\Omega')$ and  $D_{x'}^2\psi(x_0') = A$.
Since $v(x',x_0'') = u(x')$, we have
\[
\psi(x') - u(x') = \phi(x',x_0'') - v(x',x_0'').
\]
Because $v-\phi$ attains a local maximum at $(x_0',x_0'')$, the function $u - \psi$ attains a local maximum at $x_0'$.
Since $u$ is a viscosity solution of $\det D_{x'}^2 u = f$, the subsolution condition gives
\[
\det A = \det D_{x'}^2\psi(x_0') \ge f(x_0').
\]
Since $M$ is positive semidefinite, then
\[
\sigma_k(M) = \sum_{1 \le i_1 < \dots < i_k \le n} \det M[i_1,\dots,i_k] \geq \det M[1,\cdots,k] = \det A,
\]
where $M[i_1,\dots,i_k]$ is a $k \times k$ principal minor of $M$, and the  inequality holds since the principal minors of a positive semidefinite matrix are also positive semidefinite.
Hence
\[
\sigma_k(M) \ge \det A \ge f(x_0'),
\]
which yields the subsolution property for $v$.

\medskip
\noindent \textbf{Step 2: $v$ is supersolution of \eqref{eq:appendix_hessian-v}.}

Assume that $\phi \in C^2(\Omega)$ and $x_0 = (x_0', x_0'') \in \Omega$ such that $v - \phi$ has a local minimum at $x_0$ and $D^2\phi(x_0) \in \Gamma_k$.
We will show that $\sigma_k(D^2\phi(x_0)) \le f(x_0')$.

Again write
$$ M:= D^2\phi(x_0) = \begin{pmatrix} A & B \\ B^T & C \end{pmatrix}. $$
First, we claim $C \le 0$.
Indeed, fix any $\eta'' \in \mathbb{R}^{n-k}$ and consider
\[
h(s) = \phi(x_0', x_0'' + s\eta'') - v(x_0', x_0'' + s\eta'') = \phi(x_0', x_0'' + s\eta'') - u(x_0').
\]
Since $v-\phi$ has a local minimum at $x_0$, $h(s)$ has a local maximum at $s=0$.
Thus $$h''(0) = \langle C\eta'', \eta'' \rangle \le 0. $$
Because $\eta''$ is arbitrary, we have $C \leq 0$.
Since $M\in \Gamma_k$, we know $M$ has at least $k$ positive eigenvalues. So $C \leq 0$ forces $A $ is positive semidefinite.

Define $\psi(x') := \phi(x', x_0'')$, then $D_{x'}^2\psi(x_0') = A>0$.
Moreover,
\[
u(x') - \psi(x') = v(x',x_0'') - \phi(x',x_0'') \ge v(x_0',x_0'') - \phi(x_0',x_0'') = u(x_0') - \psi(x_0')
\]
for $x'$ near $x_0'$. So $u-\psi$ has a local minimum at $x_0'$.
The supersolution condition of $u$ gives
\[
\det A = \det D_{x'}^2 \psi(x_0') \le f(x_0').
\]
It remains to prove that $\sigma_k(M) \leq \det A$. We first claim the following inequality for any $X, Y \in \Gamma_k$, that is
\begin{equation}
\label{eq:appendix-X-Y}
    k \sigma_k^{1/k}(Y) \sigma_k(X)^{1-1/k} \leq \left. \frac{d}{dt} \right|_{t=0} \sigma_k(X+tY).
\end{equation}
In fact, the function $X \mapsto \sigma_k(X)^{1/k}$ is concave on $\Gamma_k$, so for any $X, Y \in \Gamma_k$,
\begin{equation}
\label{eq:appendix-concave}
\sigma_k(Y)^{1/k} \le \sigma_k(X)^{1/k} + \frac{1}{k} \sigma_k(X)^{1/k - 1} \sum_{i,j=1}^n \frac{\partial \sigma_k(X)}{\partial X_{ij}} (Y_{ij} - X_{ij}).
\end{equation}
Since
$$ \sum_{i,j=1}^n \frac{\partial \sigma_k(X)}{\partial X_{ij}} Y_{ij} = \left. \frac{d}{dt} \right|_{t=0} \sigma_k(X+tY), $$
and
$$ \sum_{i,j=1}^n \frac{\partial \sigma_k(X)}{\partial X_{ij}} X_{ij} = k \sigma_k(X),$$
plugging into \eqref{eq:appendix-concave} yields \eqref{eq:appendix-X-Y}.

Now we choose
\[
X = \begin{pmatrix} A & 0 \\ 0 & 0 \end{pmatrix}, \qquad Y = M = \begin{pmatrix} A & B \\ B^T & C \end{pmatrix}.
\]
Then $\sigma_k(X) = \det A$, and
\[
X + tY = \begin{pmatrix} (1+t)A & tB \\ tB^T & tC \end{pmatrix}.
\]
Expand $\sigma_k$ of this matrix as a sum of $k \times k$ principal minors.
Only terms that are linear in $t$ contribute to the derivative at $t=0$.
There are three cases for the choice of the $k$ indices:
\begin{itemize}
\item The minor with indices $1,\cdots,k$ gives
$$\det((1+t)A) = (1+t)^k \det A = \det A + k \det A \cdot t + O(t^2). $$
\item Minors containing exactly one index from $\{k+1,\dots,n\}$ contribute
$$ \operatorname{tr}(C) \sigma_{k-1}(A) \cdot t + O(t^2). $$
\item Minors with at least two indices from $\{k+1,\cdots,n\}$ give $O(t^2)$.
\end{itemize}
Hence
\[
\left. \frac{d}{dt} \right|_{t=0} \sigma_k  \begin{pmatrix} (1+t)A & tB \\ tB^T & tC \end{pmatrix} = k \det A + \operatorname{tr}(C) \sigma_{k-1}(A) \leq k \det A,
\]
 where we have used the fact   $C \le 0$ and $\sigma_{k-1}(A) > 0$.
Substituting this estimate into \eqref{eq:appendix-X-Y} yields
\[
 k \sigma_k(M)^{1/k} (\det A)^{1-1/k} \leq k \det A,
\]
which is exactly $\sigma_k(M) \leq \det A \leq f(x_0')$.

Thus $v$ satisfies both the subsolution and supersolution conditions for $\sigma_k(D^2 v) = F$, and therefore $v$ is a viscosity solution of the $k$-Hessian equation.
\end{proof}

\printauthorinfos

\end{document}